\theoremstyle{definition}
\newtheorem{thm}{Theorem}[section]
\newtheorem{dfn}[thm]{Definition}
\newtheorem{prp}[thm]{Proposition}
\newtheorem{lem}[thm]{Lemma}
\newtheorem{rmk}[thm]{Remark}
\newtheorem*{thm*}{Theorem}
\newtheorem*{cor*}{Corollary}
\newtheorem*{prp*}{Proposition}
\newcommand{\Ga}{\Gamma}
\newcommand{\ga}{\gamma}
\newcommand{\De}{\Delta}
\newcommand{\inn}{\in\mathbb{N}}
\newcommand{\al}{\alpha}
\newcommand{\de}{\delta}
\newcommand{\la}{\lambda}
\newcommand{\e}{\varepsilon}
\newcommand{\N}{\mathbb{N}}
\newcommand{\R}{\mathbb{R}}
\newcommand{\X}{\mathfrak{X}_0}
\DeclareMathOperator{\supp}{supp}
\DeclareMathOperator{\ran}{ran}
\DeclareMathOperator{\sgn}{sgn}
\DeclareMathOperator{\ra}{rank}
\DeclareMathOperator{\ag}{age}
\long\def\symbolfootnote[#1]#2{\begingroup%
\def\thefootnote{\fnsymbol{footnote}}\footnote[#1]{#2}\endgroup}
\begin{document}

\title[On the structure of separable $\mathcal{L}_\infty$-spaces]{On the structure of separable $\mathcal{L}_\infty$-spaces}

\author[S. A. Argyros \and I. Gasparis \and P. Motakis]{Spiros A. Argyros \and Ioannis Gasparis \and Pavlos Motakis}
\address{National Technical University of Athens, Faculty of Applied Sciences,
Department of Mathematics, Zografou Campus, 157 80, Athens, Greece}
\email{sargyros@math.ntua.gr}
\email{ioagaspa@math.ntua.gr}
\email{pmotakis@central.ntua.gr}

\begin{abstract}
Based on a construction method introduced by J. Bourgain and F. Delbaen, we give a general definition of a Bourgain-Delbaen space and prove that every infinite dimensional separable $\mathcal{L}_\infty$-space is isomorphic to such a space. Furthermore, we provide an example of a $\mathcal{L}_\infty$ and asymptotic $c_0$ space not containing $c_0$.
\end{abstract}

\maketitle

\symbolfootnote[0]{\textit{2010 Mathematics Subject Classification:} Primary 46B03, 46B06,  46B07}

\symbolfootnote[0]{\textit{Key words:} Bourgain-Delbaen spaces, separable $\mathcal{L}_\infty$-spaces, isomorphic $\ell_1$-preduals, asymptotic $c_0$ spaces.}
\symbolfootnote[0]{The authors would like to acknowledge the support of program API$\Sigma$TEIA-1082.}

\section{Introduction}
In the late 1960's J. Lindenstrauss and A. Pe\l czy\' nski introduced the class of $\mathcal{L}_\infty$-spaces, which naturally extends the class of $L_\infty$-spaces (\cite{LP}). Whether such spaces always contain a copy of $c_0$ remained a long standing open problem which was solved in the negative direction by J. Bourgain and F. Delbaen in \cite{BD}. In particular, they introduced a method for constructing $\mathcal{L}_\infty$-spaces and any example constructed with its use has been customarily called a Bourgain-Delbaen space. This method proved to be very fruitful, as it has been used extensively to construct a wide variety of $\mathcal{L}_\infty$-spaces, including the first example of a Banach space satisfying the ``scalar plus compact'' property (\cite{AH}), as well as to obtain other structural results in the geometry of Banach spaces (\cite{FOS}, \cite{7a}).

The main framework of the Bourgain-Delbaen method concerns the construction of an increasing sequence $(Y_n)_n$ of finite dimensional subspaces of a $\ell_\infty(\Ga)$ space, with $\Ga$ countably infinite, each one uniformly isomorphic to some $\ell_\infty^n$. This is achieved by carefully defining a sequence of extension operators $(i_n)_n$, each one defined on $\ell_\infty(\Ga_n)$ with $\Ga_n$ an appropriate finite subset of $\Ga$, and taking $Y_n$ to be the image of $i_n$. If the sequence $(i_n)_n$ satisfies a certain compatibility property, then the closure of the union of the $Y_n$, $n\inn$, is a $\mathcal{L}_\infty$-space whose properties depend on the definition of the aforementioned extension operators.

Based on this method, we give a broad definition of a Bourgain-Delbaen space. We include a brief study of the basic properties of such spaces and using techniques rooted in the early theory of $\mathcal{L}_\infty$-spaces (i.e. \cite{LP}, \cite{P}, \cite{LR}, \cite{JRZ}, \cite{LS}, \cite{S}) we prove that every separable infinite dimensional $\mathcal{L}_\infty$-space is isomorphic to such a space. We use this result to deduce that every separable infinite dimensional $\mathcal{L}_\infty$-space $X$ has an infinite dimensional $\mathcal{L}_\infty$-subspace $Z$, so that the quotient $X/Z$ is isomorphic to $c_0$.

In the final section of this paper we provide an example of an asymptotic $c_0$ isomorphic $\ell_1$-predual (i.e. a space whose dual is isomorphic to $\ell_1$) $\X$ that does not contain $c_0$. This result in particular yields that the proximity of a  Banach space to $c_0$ in a local setting, in the sense of being $\mathcal{L}_\infty,$ as well as an asymptotic setting, in the sense of being asymptotic $c_0$, does not imply proximity to $c_0$ in an infinite dimensional level. We think of this example as a step towards the solution of a problem in \cite[Question IV.2]{GL}, namely whether every isomorphic $\ell_1$-predual satisfying Pe\l czy\' nski's property-(u) is isomorphic to $c_0$. The question of the existence of an asymptotic $c_0$ $\mathcal{L}_\infty$-space not containing $c_0$ was asked by B. Sari.

\section{Bourgain-Delbaen spaces}
In the present section we give a general definition of the spaces that shall be called Bourgain-Delbaen spaces and prove some of their basic properties. It turns out that every infinite dimensional separable $\mathcal{L}_\infty$-space is isomorphic to a Bourgain-Delbaen space. We recall the definition of a $\mathcal{L}_\infty$-space, which was introduced in \cite[Definition 3.1]{LP}.
\begin{dfn}\label{script l p}
A Banach space $X$ is called a $\mathcal{L}_{\infty,C}$-space, for some $C\geqslant 1$, if for every finite dimensional subspace $F$ of $X$ there exists a finite dimensional subspace $G$ of $X$, containing $F$, which is $C$-isomorphic to $\ell_\infty^{n}$, where $n = \dim G$. A Banach space $X$ will be called a $\mathcal{L}_\infty$-space, if it is a $\mathcal{L}_{\infty,C}$-space for some $C\geqslant 1$.
\end{dfn}

\begin{rmk}\label{separable script l p}
If $X$ is an infinite dimensional separable Banach space then it is  well known, and not difficult to prove, that $X$ is a $\mathcal{L}_\infty$-space if and only if there exists a constant $C$ and an increasing sequence $(Y_n)_n$ of finite dimensional subspaces of $X$, whose union is dense in $X$, such that $Y_n$ is $C$-isomorphic to $\ell_\infty^{k_n}$, where $k_n = \dim Y_n$, for all $n\inn$.
\end{rmk}

\subsection{The definition of a Bourgain-Delbaen space.}\label{general definition of a BD space}
We give a broad definition of what kind of spaces we will refer to as Bourgain-Delbaen spaces.

\begin{dfn}
Let $\Gamma_1$, $\Ga$ be non-empty sets with $\Ga_1 \subset \Ga$. A linear operator $i: \ell_\infty(\Ga_1) \rightarrow \ell_\infty(\Ga)$ will be called an extension operator, if for every $x\in\ell_\infty(\Ga_1)$ and $\ga\in\Ga_1$ we have that $x(\ga) = i(x)(\ga)$.
\end{dfn}

\begin{dfn}\label{compatible}
Let $(\Ga_q)_{q=0}^\infty$ be a strictly increasing sequence of non-empty sets and $\Gamma = \cup_q\Ga_q$. A sequence of extension operators $(i_q)_{q=0}^\infty$, with $i_q: \ell_\infty(\Ga_q) \rightarrow \ell_\infty(\Ga)$ for all $q\inn\cup\{0\}$, will be called compatible if for every $p,q \inn\cup\{0\}$ with $p < q$ and $x\in\ell_\infty(\Ga_p)$, the following holds:
$$ i_p(x) = i_q(r_q(i_p(x))), $$
i.e. $i_p = i_q\circ r_q\circ i_p$, where $r_q:\ell_\infty(\Ga) \rightarrow \ell_\infty(\Ga_q)$ denotes the natural restriction operator.
\end{dfn}

\begin{dfn}\label{definition BD}
Let $(\Ga_q)_{q=0}^\infty$ be a strictly increasing sequence of non-empty finite sets, $\Ga = \cup_q\Ga_q$ and $(i_q)_{q=0}^\infty$, with $i_q: \ell_\infty(\Ga_q) \rightarrow \ell_\infty(\Ga)$ for all $q\inn\cup\{0\}$, be a sequence of compatible extension operators such that $C = \sup_q\|i_q\|$ is finite.
\begin{itemize}

\item[(i)] We define the sets $\De_0 = \Ga_0$ and $\De_q = \Ga_q\setminus\Ga_{q-1}$ for $q\inn\cup\{0\}$.

\item[(ii)] For every $\ga\in\Ga$ we define $d_\ga$, a vector in $\ell_\infty(\Ga)$, as follows: if $\ga\in\De_q$ for some $q\inn\cup\{0\}$, then $d_\ga = i_q(e_\ga)$.

\end{itemize}
The space $\mathfrak{X}_{(\Ga_q, i_q)_q} = \overline{\langle \{ d_\ga:\;\ga\in\Ga \} \rangle}$, i.e. the closed subspace of $\ell_{\infty}(\Ga)$ spanned by the vectors $(d_\ga)_{\ga\in\Ga}$, will be called a Bourgain-Delbaen space.
\end{dfn}

\begin{rmk}\label{they are isomorphisms}
Since the operators $i_q: \ell_\infty (\Ga_q) \rightarrow \ell_\infty(\Ga)$ are extension operators, it easily follows that $i_q$ is a $C$-isomorphism for all $q\inn\cup\{0\}$, where $C = \sup_q\|i_q\|$. In particular, the following hold:
\begin{itemize}

\item[(i)] if $Y_q = i_q[\ell_\infty(\Ga_q)]$, then $Y_q$ is $C$-isomorphic to $\ell_\infty(\Ga_q)$ and

\item[(ii)] the vectors $(d_\ga)_{\ga\in\De_q}$ are $C$-equivalent to the unit vector basis of $\ell_\infty(\De_q)$ for all $q\inn\cup\{0\}$.

\end{itemize}
\end{rmk}

\begin{rmk}\label{rephrase script l infinity}
The above Remark \ref{they are isomorphisms} and Proposition \ref{it is scrip L infinity} from Subsection \ref{properties bd subsec} provide an equivalent definition of a Bourgain-Delbaen space, namely the following.

Let $(\Ga_q)_q$ be a strictly increasing sequence of finite non-empty sets and $(Y_q)_q$ be an increasing sequence of subspaces of $\ell_\infty(\Ga)$, where $\Ga = \cup_q\Ga_q$. If there exists a constant $C>0$ such that for every $q\inn$, when restricted onto the subspace $Y_q$, the restriction operator  $r_q:Y_q\rightarrow \ell_\infty(\Ga_q)$ is an onto $C$-isomorphism, then the space $X = \overline{\cup_qY_q}$ is a Bourgain-Delbaen space.

Indeed, it is straightforward to check that the maps $i_q:\ell_\infty(\Ga_q)\rightarrow \ell_\infty(\Ga)$ with $i_q = r_q^{-1}:\ell_\infty(\Ga_q)\rightarrow Y_q\hookrightarrow \ell_\infty(\Ga)$ are uniformly bounded compatible extension operators and $\mathfrak{X}_{(\Ga_q,i_q)_q} = X$.
\end{rmk}

\subsection{Properties of a Bourgain-Delbaen space}\label{properties bd subsec}
We present some basic properties of Bourgain-Delbaen space, which can be deduced from Definition \ref{definition BD}.

\begin{prp}\label{it is an fdd}
Let $\mathfrak{X}_{(\Ga_q,i_q)_q}$ be a Bourgain-Delbaen space. For all $q\inn\cup\{0\}$ we denote by $M_q = \langle \{d_\ga: \ga\in\De_q\}\rangle$. Then $(M_q)_{q=0}^\infty$ forms a finite dimensional decomposition (FDD) for the space $\mathfrak{X}_{(\Ga_q,i_q)_q}$. More precisely, for every $p,q\inn\cup\{0\}$ with $p <q$ and $x_\ell\in M_\ell$ for $\ell = 0,1,\ldots,q$, the following holds:
\begin{equation}
\left\|\sum_{\ell = 0}^p x_\ell\right\| \leqslant C\left\|\sum_{\ell = 0}^q x_\ell\right\|
\end{equation}
where $C = \sup_q\|i_q\|$.
\end{prp}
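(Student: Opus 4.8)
The goal is to show that the projections associated with the subspaces $M_\ell$ yield an FDD, which reduces to establishing the displayed inequality. Let me first set up the right tool. For each $q\inn\cup\{0\}$ I want to construct a bounded projection $P_{[0,q]}$ onto $\langle\{d_\ga:\ga\in\Ga_q\}\rangle = \oplus_{\ell=0}^q M_\ell$. The natural candidate is $P_{[0,q]} = i_q\circ r_q$, where $r_q:\ell_\infty(\Ga)\to\ell_\infty(\Ga_q)$ is restriction and $i_q:\ell_\infty(\Ga_q)\to\ell_\infty(\Ga)$ is the extension operator.

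First I would verify that $P_{[0,q]}$ maps $\X$-vectors of the form $d_\ga$ correctly: if $\ga\in\De_\ell$ with $\ell\le q$, then $d_\ga = i_\ell(e_\ga)$, and by the compatibility relation $i_\ell = i_q\circ r_q\circ i_\ell$ (Definition \ref{compatible}) I get $P_{[0,q]}(d_\ga) = i_q(r_q(i_\ell(e_\ga))) = i_\ell(e_\ga) = d_\ga$; whereas if $\ga\in\De_\ell$ with $\ell > q$, then $r_q(d_\ga) = r_q(i_\ell(e_\ga))$ restricts the coordinate $e_\ga$ (supported off $\Ga_q$) and I must check this vanishes. The key point is that $i_\ell$ is an extension operator, so $i_\ell(e_\ga)$ agrees with $e_\ga$ on $\Ga_\ell\supset\Ga_q$, hence on $\Ga_q$ it equals $e_\ga|_{\Ga_q} = 0$ since $\ga\notin\Ga_q$; therefore $r_q(d_\ga) = 0$ and $P_{[0,q]}(d_\ga) = 0$. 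This shows $P_{[0,q]}$ is the identity on $\oplus_{\ell=0}^q M_\ell$ and annihilates $M_\ell$ for $\ell > q$, so it is a projection onto the desired subspace with range exactly $\oplus_{\ell=0}^q M_\ell$ (using Remark \ref{they are isomorphisms}(i) that $Y_q = i_q[\ell_\infty(\Ga_q)]$ is spanned by the $d_\ga$, $\ga\in\Ga_q$).

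The norm bound is then immediate: $\|P_{[0,q]}\| \le \|i_q\|\,\|r_q\| \le C\cdot 1 = C$, since restriction operators have norm at most $1$ and $C = \sup_q\|i_q\|$. Applying $P_{[0,p]}$ to $\sum_{\ell=0}^q x_\ell$ with $x_\ell\in M_\ell$ kills every term with $\ell > p$ and fixes the rest, giving $P_{[0,p]}\big(\sum_{\ell=0}^q x_\ell\big) = \sum_{\ell=0}^p x_\ell$, whence
\begin{equation*}
\left\|\sum_{\ell=0}^p x_\ell\right\| = \left\|P_{[0,p]}\Big(\sum_{\ell=0}^q x_\ell\Big)\right\| \le C\left\|\sum_{\ell=0}^q x_\ell\right\|,
\end{equation*}
which is exactly the claimed inequality. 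That the $(M_q)_q$ form an FDD follows from this uniform bound on the partial-sum projections together with the fact that $\bigcup_q \oplus_{\ell=0}^q M_\ell$ is dense in $\X$ by definition.

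The only genuinely delicate step is the verification that $P_{[0,q]}$ acts correctly on $d_\ga$ for $\ga\in\De_\ell$ with $\ell>q$, i.e. that $r_q(d_\ga)=0$; everything hinges on correctly combining the extension-operator property with the compatibility relation, and one must be careful that the argument uses $\Ga_q\subset\Ga_\ell$. Once the projections are seen to be bounded idempotents with the right kernel and range, the FDD property and the inequality are formal.
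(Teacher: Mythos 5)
Your proof is correct and follows essentially the same route as the paper: both arguments rest on the two facts that compatibility gives $i_p\circ r_p(x_\ell)=x_\ell$ for $\ell\leqslant p$ while the extension property gives $r_p(x_\ell)=0$ for $\ell>p$, and then bound $\|i_p\circ r_p\|\leqslant C$. The only cosmetic difference is that you package this as the projection $P_{[0,p]}=i_p\circ r_p$ acting on basis vectors (which is precisely what the paper records afterwards in Remark \ref{bd projections}), whereas the paper manipulates the norms directly on arbitrary $x_\ell=i_\ell(y_\ell)$.
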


\begin{proof}
Let $x_\ell = i_\ell(y_\ell)$ for some $y_\ell\in\langle\{e_\ga:\;\ga\in\De_\ell\}\rangle$, for $\ell = 0,1,\ldots,q$. Then, by the compatibility of the operators, we have that $x_\ell = i_p(r_p(i_\ell(y_\ell)))$ i.e. $x_\ell = i_p\circ r_p(x_\ell)$ for $\ell = 0,1,\ldots,p$ and hence:
\begin{equation}
\left\|\sum_{\ell = 0}^p x_\ell \right\| = \left\| i_p\circ r_p\left( \sum_{\ell = 0}^p x_\ell \right) \right\| \leqslant C \left\| r_p\left( \sum_{\ell = 0}^p x_\ell \right) \right\|.\label{it is an fdd1}
\end{equation}
On the other hand, once more by the extension property of the operators, we have that $r_p(x_\ell) = 0$ (which also yields that $i_p\circ r_p(x_\ell) = 0$) for $\ell = p+1,\ldots,q$ and therefore we obtain:
\begin{equation}
\left\|\sum_{\ell = 0}^q x_\ell \right\| \geqslant \left\|r_p\left(\sum_{\ell = 0}^q x_\ell\right) \right\| = \left\|r_p\left(\sum_{\ell = 0}^p x_\ell\right) \right\|.\label{it is an fdd2}
\end{equation}
The desired inequality immediately follows from \eqref{it is an fdd1} and \eqref{it is an fdd2}.
\end{proof}

\begin{rmk}\label{bd projections}
By the proposition above, for every interval $E = \{p,\ldots,q\}$ of $\mathbb{N}\cup\{0\}$ we can define the projection $P_E : \mathfrak{X}_{(\Ga_q,i_q)_q} \rightarrow M_p +\cdots + M_q$, associated to the FDD $(M_q)_q$ and the interval $E$. The above proof implies that
\begin{equation}\label{projection on initial interval}
P_{[0,q]}x = i_q\circ r_q(x)
\end{equation}
for all $q\inn\cup\{0\}$ and $x\in\mathfrak{X}_{(\Ga_q,i_q)_q}$ and hence also
\begin{equation}\label{projection on interval}
P_{(p,q]}x = i_q\circ r_q(x) - i_p\circ r_p(x)
\end{equation}
for all $p,q\inn\cup\{0\}$ with $p <q$ and $x\in\mathfrak{X}_{(\Ga_q,i_q)_q}$.
We shall call $P_E$ the Bourgain-Delbaen projection onto $E$. Note that $\|P_E\| \leqslant 2C$ for every interval $E$ of $\N\cup\{0\}$.
\end{rmk}

\begin{rmk}\label{it has a basis}
Proposition \ref{it is an fdd} in conjunction with Remark \ref{they are isomorphisms} (ii) yield that $\left((d_\ga)_{\ga\in\De_q}\right)_{q=0}^\infty$ is a Schauder basis of $\mathfrak{X}_{(\Ga_q,i_q)_q}$. Although in some cases it is more convenient to use the FDD $(M_q)_q$, in Section \ref{asymptotic c0} we shall indeed use this basis.
\end{rmk}


\begin{rmk}\label{block form}
Let $x$ be a finitely supported vector in $\mathfrak{X}_{(\Ga_q,i_q)_q}$ with $\ran x = E$. Using Remark \ref{bd projections} one can see that if $q = \max E$, there exists $y\in\ell_\infty(\Ga_q)$ with $\supp y\subset \cup_{p\in E}\De_p$ such that $x = i_q(y)$.
\end{rmk}

\begin{prp}\label{it is scrip L infinity}
Every Bourgain-Delbaen space $\mathfrak{X}_{(\Ga_q,i_q)_q}$ is a $\mathcal{L}_{\infty}$-space. More precisely, if $Y_q = i_q[\ell_\infty(\Ga_q)]$ for all $q\inn\cup\{0\}$, then $(Y_q)_q$ is a strictly increasing sequence of subspaces of $\mathfrak{X}_{(\Ga_q,i_q)_q}$, whose union is dense in the whole space and for every $q\inn\cup\{0\}$ $Y_q$ is $C$-isomorphic to $\ell_\infty(\Ga_q)$, where  $C = \sup_q\|i_q\|$.
\end{prp}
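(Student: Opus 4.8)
The plan is to verify the three assertions packaged in the statement—that $(Y_q)_q$ is strictly increasing, that each $Y_q$ is $C$-isomorphic to $\ell_\infty(\Ga_q)$, and that $\cup_q Y_q$ is dense in $\mathfrak{X}_{(\Ga_q,i_q)_q}$—and then to invoke Remark \ref{separable script l p} to conclude the $\mathcal{L}_\infty$ property, since $\ell_\infty(\Ga_q)$ is isometric to $\ell_\infty^{|\Ga_q|}$. First I would record that each $i_q$ is a $C$-isomorphism onto $Y_q$, which is precisely Remark \ref{they are isomorphisms}(i): for $x\in\ell_\infty(\Ga_q)$ the extension property gives $r_q(i_q(x)) = x$, whence $\|x\| = \|r_q(i_q(x))\| \leqslant \|i_q(x)\| \leqslant C\|x\|$. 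In particular $i_q$ is injective, so $\dim Y_q = |\Ga_q|$ and $Y_q$ is $C$-isomorphic to $\ell_\infty(\Ga_q)$.

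Next I would establish the inclusion $Y_p\subset Y_q$ for $p<q$. Given $z = i_p(x)\in Y_p$, the compatibility relation of Definition \ref{compatible} yields $z = i_p(x) = i_q(r_q(i_p(x))) = i_q(r_q(z))\in Y_q$, so the inclusion holds; it is strict because $\dim Y_p = |\Ga_p| < |\Ga_q| = \dim Y_q$, the sets $(\Ga_q)_q$ being strictly increasing and finite. Thus $(Y_q)_q$ is a strictly increasing sequence of subspaces.

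The only point calling for a genuine argument is the inclusion $Y_q\subset\mathfrak{X}_{(\Ga_q,i_q)_q}$ together with density, since a priori $Y_q$ is merely the image of $i_q$ and need not obviously lie inside the closed span of the $d_\ga$. I would proceed as follows: for $\ga\in\De_p$ with $p\leqslant q$ one has $d_\ga = i_p(e_\ga)\in Y_p\subset Y_q$ by the previous step, so $\spn\{d_\ga:\ga\in\Ga_q\}\subset Y_q$. By Remark \ref{it has a basis} the family $(d_\ga)_{\ga\in\Ga}$ is a Schauder basis, hence linearly independent, and therefore $\spn\{d_\ga:\ga\in\Ga_q\}$ has dimension $|\Ga_q| = \dim Y_q$. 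Comparing dimensions forces $Y_q = \spn\{d_\ga:\ga\in\Ga_q\}\subset\mathfrak{X}_{(\Ga_q,i_q)_q}$.

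Finally, this identification gives $\{d_\ga:\ga\in\Ga\}\subset\cup_q Y_q\subset\mathfrak{X}_{(\Ga_q,i_q)_q}$, so passing to closures yields $\overline{\cup_q Y_q} = \mathfrak{X}_{(\Ga_q,i_q)_q}$; that is, the union of the $Y_q$ is dense. Remark \ref{separable script l p} then applies and delivers the $\mathcal{L}_\infty$ conclusion. I expect the dimension-counting identification $Y_q = \spn\{d_\ga:\ga\in\Ga_q\}$ to be the sole obstacle of substance; the isomorphism constant and the increasing property follow directly from the extension and compatibility properties already in hand.
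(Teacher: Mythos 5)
Your proposal is correct and follows essentially the same route as the paper: Remark \ref{they are isomorphisms}(i) for the $C$-isomorphism, the compatibility relation $i_p = i_q\circ r_q\circ i_p$ for the inclusions, and a dimension count to identify $Y_q$ with $\spn\{d_\ga:\;\ga\in\Ga_q\}$ (the paper writes this as $M_0+\cdots+M_q$ and counts dimensions via the FDD of Proposition \ref{it is an fdd} and Remark \ref{they are isomorphisms}(ii), which is exactly the content of Remark \ref{it has a basis} that you invoke, so no circularity arises). The only cosmetic difference is that you prove $Y_p\subset Y_q$ directly and then locate the $d_\ga$'s inside $Y_q$, whereas the paper derives the increasing property and density as consequences of the identification itself.
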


\begin{proof}
By Remark \ref{they are isomorphisms} (i) we have that $Y_q$ is $C$-isomorphic to $\ell_\infty(\Ga_q)$ for all $q\inn\cup\{0\}$. It remains to show that the sequence $(Y_q)_q$ is strictly increasing and its union is dense in $\mathfrak{X}_{(\Ga_q,i_q)_q}$. We will prove that $Y_q = M_0 + \cdots + M_q$, where $M_q = \langle\{d_\ga:\;\ga\in\De_q\}\rangle$ for all $q\inn\cup\{0\}$. Note that, in conjunction with Proposition \ref{it is an fdd}, the previous fact easily implies the desired result.

Note that for $p,q\inn\cup\{0\}$ with $p \leqslant q$ and $x\in M_p$ there is a $y\in\langle\{e_\ga:\;\ga\in\De_p\}\rangle$ with $x = i_p(y)$. Using the compatibility of the extension operators we obtain that $x = i_q(r_q(x))\in i_q(\ell_\infty(\Ga_q)) = Y_q$. We have hence concluded that $M_0 + \cdots + M_q\subset Y_q$. To conclude that the above inclusion cannot be proper, we will show that $\dim(M_0 + \cdots +M_q) = \dim Y_q$. Note that since, Proposition \ref{it is an fdd}, $(M_q)_{q=0}^\infty$ is an FDD, we have that $\dim(M_0 + \cdots +M_q) = \dim(M_0) + \cdots + \dim(M_q)$. Moreover, Remark \ref{they are isomorphisms} (ii) implies that $\dim(M_p) = \#\De_p$ for $p=0,\ldots,q$. The definition of the sets $\De_p$ yields that $\dim = (M_0 + \cdots +M_q) = \#\Ga_q$. Remark \ref{they are isomorphisms} (i) implies that $\dim Y_q = \#\Ga_q$ and therefore $\dim(M_0 + \cdots +M_q) = \dim Y_q$.
\end{proof}

\subsection{The functionals $(e_\ga^*)_{\ga\in\Ga}$}

Let $\mathfrak{X}_{(\Ga_q,i_q)_q}$ be a Bourgain-Delbaen space. For every $\ga\in\Ga$ we denote by $e_\ga^*:\mathfrak{X}_{(\Ga_q,i_q)_q}\rightarrow\mathbb{R}$ the evaluation functional on the $\ga$'th coordinate, defined on $\ell_\infty(\Ga)$ and then restricted to the subspace $\mathfrak{X}_{(\Ga_q,i_q)_q}$. Note that $\|e_\ga^*\| \leqslant 1$ for all $\ga\in\Ga$.

\begin{prp}\label{dual contains l1}
Let $\mathfrak{X}_{(\Ga_q,i_q)_q}$ be a Bourgain-Delbaen space. Then $(e_\ga^*)_{\ga\in\Ga}$ is $C$-equivalent to the unit vector basis of $\ell_1(\Ga)$, where $C = \sup_q\|i_q\|$.
\end{prp}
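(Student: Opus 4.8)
The statement asserts $C$-equivalence, so for an arbitrary finitely supported scalar sequence $(a_\ga)_{\ga\in\Ga}$ there are two inequalities to establish. The upper estimate is immediate: since each evaluation functional satisfies $\|e_\ga^*\|\leqslant 1$, the triangle inequality yields $\|\sum_\ga a_\ga e_\ga^*\|\leqslant\sum_\ga|a_\ga|$ at once (indeed with constant $1$, which a fortiori gives the bound $C\sum_\ga|a_\ga|$). All the work therefore lies in the lower estimate $\frac1C\sum_\ga|a_\ga|\leqslant\|\sum_\ga a_\ga e_\ga^*\|$.

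For the lower bound, the plan is to exhibit a single norming vector in the ball of radius $C$ of $\mathfrak{X}_{(\Ga_q,i_q)_q}$. Let $F=\supp(a_\ga)$ and choose $q\inn\cup\{0\}$ large enough that $F\subset\Ga_q$. I would set $y=\sum_{\ga\in F}\sgn(a_\ga)e_\ga\in\ell_\infty(\Ga_q)$, so that $\|y\|_{\ell_\infty(\Ga_q)}=1$, and then take $x=i_q(y)$. By Proposition \ref{it is scrip L infinity}, which records that $Y_q=i_q[\ell_\infty(\Ga_q)]\subset\mathfrak{X}_{(\Ga_q,i_q)_q}$, the vector $x$ genuinely lies in the space, and $\|x\|=\|i_q(y)\|\leqslant C\|y\|=C$.

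The remaining point is to evaluate $\sum_\ga a_\ga e_\ga^*(x)$ using the defining property of an extension operator. Since $i_q$ is an extension operator and $F\subset\Ga_q$, we have $x(\ga)=i_q(y)(\ga)=y(\ga)=\sgn(a_\ga)$ for every $\ga\in F$, and consequently
\begin{equation*}
\sum_\ga a_\ga e_\ga^*(x)=\sum_{\ga\in F}a_\ga\,x(\ga)=\sum_{\ga\in F}a_\ga\sgn(a_\ga)=\sum_\ga|a_\ga|.
\end{equation*}
Dividing by $\|x\|\leqslant C$ then gives $\|\sum_\ga a_\ga e_\ga^*\|\geqslant\|x\|^{-1}\sum_\ga|a_\ga|\geqslant\frac1C\sum_\ga|a_\ga|$, which is the desired inequality.

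I do not anticipate a genuine obstacle: the argument is a direct application of the extension property against a carefully chosen sign vector, and the choice of $q$ is immaterial, so the estimate is uniform over all finitely supported $(a_\ga)_{\ga\in\Ga}$. The only step requiring a moment's care is the verification that the test vector $x=i_q(y)$ belongs to $\mathfrak{X}_{(\Ga_q,i_q)_q}$ rather than merely to the ambient $\ell_\infty(\Ga)$; this is precisely what the inclusion $Y_q\subset\mathfrak{X}_{(\Ga_q,i_q)_q}$ from Proposition \ref{it is scrip L infinity} supplies.
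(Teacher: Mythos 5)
Your proof is correct and follows essentially the same route as the paper's: both obtain the upper bound from $\|e_\ga^*\|\leqslant 1$ and the lower bound by testing against the sign vector $x=i_q\bigl(\sum_{\ga}\sgn(a_\ga)e_\ga\bigr)$, whose norm is at most $C$ and whose coordinates on the support equal the signs by the extension property. Your explicit remark that $x$ lies in $\mathfrak{X}_{(\Ga_q,i_q)_q}$ (via $Y_q\subset\mathfrak{X}_{(\Ga_q,i_q)_q}$) is a point the paper leaves implicit, but it is the same argument.
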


\begin{proof}
Let $A$ be a non-empty finite subset of $\Ga$ and $(\la_\ga)_{\ga\in A}$ be scalars. Choose $q\inn\cup\{0\}$ such that $A\subset \Ga_q$ and take the normalized vector $y = \sum_{\ga\in A}\sgn\la_\ga e_\ga$ in $\ell_{\infty}(\Ga_q)$. Note that $\|i_q(y)\| \leqslant C$. Since $i_q$ is an extension operator and $A\subset \Ga_q$ we obtain the following estimate:
\begin{eqnarray*}
C\sum_{\ga\in A}|\la_\ga| &\geqslant& C\left\|\sum_{\ga\in A}\la_\ga e_\ga^*\right\| \geqslant \sum_{\ga\in A}\la_\ga e_\ga^*(i_q(y)) = \sum_{\ga\in A}\la_\ga i_q(y)(\ga)\\
 &=& \sum_{\ga\in A}\la_\ga y(\ga) = \sum_{\ga\in A}\la_\ga \sgn \la_\ga = \sum_{\ga\in A} |\la_\ga|.
\end{eqnarray*}
i.e. $C^{-1}\sum|\la_\ga| \leqslant \|\sum\la_\ga e_\ga^*\| \leqslant \sum|\la_\ga|$, which is the desired result.	
\end{proof}

\begin{dfn}\label{extension functionals and biorthogonals}
Let $\mathfrak{X}_{(\Ga_q,i_q)_q}$ be a Bourgain-Delbaen space. For every $\ga\in\Ga$ we define two bounded linear functionals $c_\ga^*, d_\ga^*: \mathfrak{X}_{(\Ga_q,i_q)_q}\rightarrow \mathbb{R}$ as follows:
\begin{itemize}

\item[(i)] if $\ga\in\De_0$ then $c_\ga^* = 0$ and otherwise if $\ga\in\De_{q+1}$ for some $q\inn\cup\{0\}$, then $c_\ga^* = e_\ga^*\circ i_q\circ r_q$  and

\item[(ii)] $d_\ga^* = e_\ga^* - c_\ga^*$ for all $\ga\in\Ga$.

\end{itemize}
\end{dfn}

\begin{rmk}\label{coordinate composed with projection}
By Remark \ref{bd projections} obtain that if $q\inn\cup\{0\}$ and $\ga\in\De_{q+1}$, then $c_\ga^* = e_\ga^*\circ P_{[0,q]}$ and hence $d_\ga^* = e_\ga^*\circ P_{(q,\infty)}$. Also, using the extension property of the operators $i_q$, it easily follows from Remark \ref{bd projections} that for $p\inn\cup\{0\}$ and $\ga\in\Ga_p$ we have that $e_\ga^* = e_\ga^*\circ P_{[0,p]}$ which also implies that if $\ga\in\De_p$, then $d_\ga^* = e_\ga^*\circ P_{\{p\}}$. Moreover, for $\ga\in\De_0$, $d_\ga^* = e_\ga^*$.
\end{rmk}

\begin{lem}\label{extension functionals are in the span of evaluation functionals}
Let $\mathfrak{X}_{(\Ga_q,i_q)_q}$ be a Bourgain-Delbaen space. For all $q \inn\cup\{0\}$ and $\ga_0\in\De_{q+1}$ the functional $c_{\ga_0}^*$ is in the linear span of $\{e_\ga^*:\;\ga\in\Ga_q\}$.
\end{lem}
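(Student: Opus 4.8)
The plan is to unwind the definition $c_{\ga_0}^* = e_{\ga_0}^*\circ i_q\circ r_q$ directly and to read off the required coefficients. The crucial structural fact I would exploit is that $\Ga_q$ is \emph{finite}, so that the restriction operator $r_q$ produces a vector that expands as a genuine finite linear combination of the unit vectors $(e_\ga)_{\ga\in\Ga_q}$; after that, the linearity of $i_q$ does all the work.

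Concretely, I would fix $\ga_0\in\De_{q+1}$ and an arbitrary $x\in\mathfrak{X}_{(\Ga_q,i_q)_q}$, and write $r_q(x) = \sum_{\ga\in\Ga_q}x(\ga)e_\ga$, a finite sum in $\ell_\infty(\Ga_q)$. Applying the linear map $i_q$ and then evaluating at the coordinate $\ga_0$ — an interchange of coordinate evaluation with a finite sum that needs no justification beyond linearity — gives
\[
c_{\ga_0}^*(x) = \big(i_q(r_q(x))\big)(\ga_0) = \sum_{\ga\in\Ga_q}\big(i_q(e_\ga)\big)(\ga_0)\,e_\ga^*(x),
\]
where I have used $x(\ga) = e_\ga^*(x)$. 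Setting $a_\ga := \big(i_q(e_\ga)\big)(\ga_0)$, which are fixed real numbers independent of $x$, this rewrites as $c_{\ga_0}^* = \sum_{\ga\in\Ga_q}a_\ga e_\ga^*$, exhibiting $c_{\ga_0}^*$ as an element of $\spn\{e_\ga^*:\ga\in\Ga_q\}$.

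I do not anticipate any genuine obstacle: the statement is an immediate consequence of the linearity of the extension operator together with the finiteness of $\Ga_q$. The only minor point worth flagging is that I would not attempt to identify the coefficients $a_\ga$ with the Bourgain-Delbaen data $(d_\ga)$ or with the norm constant $C$ — their existence as fixed scalars is all the conclusion requires, and trying to evaluate them explicitly would be an unnecessary detour.
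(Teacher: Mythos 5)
Your proof is correct, but it takes a genuinely different route from the paper's. The paper argues by duality: to show $c_{\ga_0}^*\in\spn\{e_\ga^*:\;\ga\in\Ga_q\}$ it invokes the standard linear-algebra criterion that a functional lies in the span of finitely many functionals as soon as it vanishes on the intersection of their kernels, and then checks the inclusion $\bigcap_{\ga\in\Ga_q}\ker e_\ga^*\subset\ker c_{\ga_0}^*$ directly: if $e_\ga^*(x)=0$ for all $\ga\in\Ga_q$ then $r_q(x)=0$, hence $c_{\ga_0}^*(x)=e_{\ga_0}^*(i_q(r_q(x)))=0$. You instead compute: you expand $r_q(x)=\sum_{\ga\in\Ga_q}x(\ga)e_\ga$ (legitimate precisely because $\Ga_q$ is finite), push the finite sum through the linear map $i_q$ and the coordinate evaluation at $\ga_0$, and read off $c_{\ga_0}^*=\sum_{\ga\in\Ga_q}a_\ga e_\ga^*$ with $a_\ga=(i_q(e_\ga))(\ga_0)$. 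Both arguments hinge on the finiteness of $\Ga_q$ --- yours through the basis expansion, the paper's through the kernel criterion, which fails for infinite families without closure considerations and which the paper uses without proof. Your version buys constructiveness, exhibiting the coefficients explicitly; the paper's buys brevity at the cost of resting on that (easy, standard) auxiliary fact. Your closing caution is also well placed: the vectors $i_q(e_\ga)$ are in general not the basis vectors $d_\ga=i_p(e_\ga)$ for $\ga\in\De_p$ with $p<q$, so declining to identify the $a_\ga$ with the Bourgain-Delbaen data avoids a genuine pitfall rather than an imaginary one.
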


\begin{proof}
It suffices to show that $\cap_{\ga\in\Ga_q}\ker e_\ga^* \subset \ker c_{\ga_0}^*$ and to that end let $x\in\mathfrak{X}_{(\Ga_q,i_q)_q}$ with $e_\ga^*(x) = 0$ for all $\ga\in\Ga_q$. Recall that $x$ is also a vector in $\ell_\infty(\Ga)$ and in particular we have that $x(\ga) = e_\ga^*(x) = 0$ for all $\ga\in\Ga_q$ and hence, $r_q(x) = 0$. By the definition of $c_{\ga_0}^*$ it easily follows that $c_{\ga_0}^*(x) = 0$.
\end{proof}

\begin{prp}\label{facts about the dual}
Let $\mathfrak{X}_{(\Ga_q,i_q)_q}$ be a Bourgain-Delbaen space. The following hold.
\begin{itemize}

\item[(i)] The functionals $(d_\ga^*)_{\ga\in\Ga}$ are biorthogonal to the vectors $(d_\ga)_{\ga\in\Ga}$.

\item[(ii)] For all $q\inn\cup\{0\}$ we have that $\langle\{d_\ga^*:\;\ga\in\Ga_q\}\rangle = \langle\{e_\ga^*:\;\ga\in\Ga_q\}\rangle$. In particular, the closed linear span of the functionals $(d_\ga^*)_{\ga\in\Ga}$ is $C$-isomorphic to $\ell_1$, where $C = \sup_q\|i_q\|$.

\item[(iii)] If moreover the FDD $(M_q)_q$ is shrinking, then the closed linear span of the functionals $(d_\ga^*)_{\ga\in\Ga}$ is $\mathfrak{X}_{(\Ga_q,i_q)_q}^*$. In particular, $\mathfrak{X}_{(\Ga_q,i_q)_q}^*$ is $C$-isomorphic to $\ell_1$.

\end{itemize}
\end{prp}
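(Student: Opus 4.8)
The plan is to treat the three parts in order, building throughout on the projection formulas of Remark~\ref{coordinate composed with projection}.

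For (i), I would fix $\ga\in\De_p$ and $\ga'\in\De_{p'}$ and compute $d_\ga^*(d_{\ga'})$ using the identity $d_\ga^* = e_\ga^*\circ P_{\{p\}}$. Since $d_{\ga'}\in M_{p'}$ and $(M_q)_q$ is an FDD by Proposition~\ref{it is an fdd}, the projection $P_{\{p\}}(d_{\ga'})$ vanishes when $p\neq p'$ and equals $d_{\ga'}$ when $p=p'$. In the latter case $d_\ga^*(d_{\ga'}) = e_\ga^*(d_{\ga'}) = d_{\ga'}(\ga) = i_p(e_{\ga'})(\ga) = e_{\ga'}(\ga)$, the last equality being the extension property applied at $\ga\in\Ga_p$; this equals $1$ if $\ga=\ga'$ and $0$ otherwise, giving biorthogonality.

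For (ii), one inclusion is immediate from the defining relation $d_\ga^* = e_\ga^* - c_\ga^*$: if $\ga\in\De_{p+1}$ with $p+1\leqslant q$, then Lemma~\ref{extension functionals are in the span of evaluation functionals} places $c_\ga^*$ in $\langle\{e_{\ga'}^*:\ga'\in\Ga_p\}\rangle\subset\langle\{e_{\ga'}^*:\ga'\in\Ga_q\}\rangle$, whence $d_\ga^*\in\langle\{e_{\ga'}^*:\ga'\in\Ga_q\}\rangle$. To upgrade this inclusion to equality I would count dimensions: part (i) shows $(d_\ga^*)_{\ga\in\Ga_q}$ are biorthogonal to the linearly independent vectors $(d_\ga)_{\ga\in\Ga_q}$ and hence are themselves independent, while Proposition~\ref{dual contains l1} shows $(e_\ga^*)_{\ga\in\Ga_q}$ are independent; both spans thus have dimension $\#\Ga_q$ and must coincide. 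Taking the union over $q$ and closing then identifies the closed linear span of $(d_\ga^*)_{\ga\in\Ga}$ with that of $(e_\ga^*)_{\ga\in\Ga}$, which by Proposition~\ref{dual contains l1} is $C$-isomorphic to $\ell_1$.

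Part (iii) is the substantive step. The key observation is that, by (i) together with $d_\ga^* = e_\ga^*\circ P_{\{q\}}$ for $\ga\in\De_q$, the functionals $(d_\ga^*)_{\ga\in\De_q}$ lie in the range $P_{\{q\}}^*\mathfrak{X}_{(\Ga_q,i_q)_q}^*$ of the adjoint projection, and since they are $\#\De_q = \dim M_q = \ra P_{\{q\}}^*$ linearly independent functionals they span it; summing over $p\leqslant q$ yields $\langle\{d_\ga^*:\ga\in\Ga_q\}\rangle = P_{[0,q]}^*\mathfrak{X}_{(\Ga_q,i_q)_q}^*$. Now I would invoke the standard characterization of a shrinking FDD: $(M_q)_q$ is shrinking exactly when $\|x^*\circ P_{(q,\infty)}\|\to 0$ for every $x^*$, equivalently $P_{[0,q]}^*x^*\to x^*$, which places every $x^*$ in $\overline{\bigcup_q P_{[0,q]}^*\mathfrak{X}_{(\Ga_q,i_q)_q}^*} = \overline{\langle\{d_\ga^*:\ga\in\Ga\}\rangle}$. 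Combined with (ii), this closed span is all of $\mathfrak{X}_{(\Ga_q,i_q)_q}^*$ and is $C$-isomorphic to $\ell_1$. I expect the only delicate point to be the clean identification of $\langle\{d_\ga^*:\ga\in\De_q\}\rangle$ with the dual-FDD component $P_{\{q\}}^*\mathfrak{X}_{(\Ga_q,i_q)_q}^*$ and the correct appeal to the shrinking criterion; everything else is bookkeeping with the projection formulas and the dimension counts.
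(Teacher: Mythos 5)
Your proposal is correct and follows essentially the same route as the paper's proof: part (i) from $d_\ga^* = e_\ga^*\circ P_{\{q\}}$, part (ii) by the inclusion coming from Lemma~\ref{extension functionals are in the span of evaluation functionals} together with a dimension count via biorthogonality, and the $\ell_1$ statement from Proposition~\ref{dual contains l1}. For part (iii) the paper merely notes that a shrinking FDD makes the basis $(d_\ga)_\ga$ shrinking and invokes the standard fact that the dual of a space with a shrinking basis is the closed span of the biorthogonal functionals; your adjoint-projection argument (identifying $\langle\{d_\ga^*:\ga\in\Ga_q\}\rangle$ with $P_{[0,q]}^*\mathfrak{X}_{(\Ga_q,i_q)_q}^*$ and using $P_{[0,q]}^*x^*\to x^*$) is exactly a self-contained proof of that standard fact, not a genuinely different approach.
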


\begin{proof}
The first statement easily follows from Remark \ref{coordinate composed with projection}, in particular the fact that for $q\inn\cup\{0\}$ and $\ga\in\De_q$ we have that $d_\ga^* = e_\ga^*\circ P_{\{q\}}$.


For the proof of (ii) observe that by Lemma \ref{extension functionals are in the span of evaluation functionals} and $d_\ga^* = e_\ga^* - c_\ga^*$, we have that $\langle\{d_\ga^*:\;\ga\in\Ga_q\}\rangle \subset \langle\{e_\ga^*:\;\ga\in\Ga_q\}\rangle$. Moreover (i) implies that $\dim \langle\{d_\ga^*:\;\ga\in\Ga_q\}\rangle = \#\Ga_q$ and hence the inclusion cannot be proper. The last part of statement (ii) follows from Proposition \ref{dual contains l1}.

To prove the last statement note that if  $(M_q)_q$ is shrinking, then so is the basis $\left((d_\ga)_{\ga\in\De_q}\right)_{q=0}^\infty$. From (ii) the desired result follows.
\end{proof}

\section{Separable $\mathcal{L}_\infty$-spaces are Bourgain-Delbaen spaces}\label{script L infinity are BD}
We combine some simple remarks concerning Bourgain-Delbaen spaces with results from \cite{JRZ}, \cite{LS} and \cite{S} to prove that whenever $X$ is an infinite dimensional separable $\mathcal{L}_\infty$-space, then $X$ is isomorphic to a Bourgain-Delbaen space.

\begin{lem}\label{good ell1 in measures}
Let $(\mu_i)_i$ be a bounded sequence in $\mathcal{M}[0,1]$. Then there exists a sequence $(t_i)_i$ of elements of $[0,1]$ such that if $\nu_i = \mu_i + \de_{t_i}$ for all $i\inn$ then the following hold:
\begin{itemize}

\item[(i)] the sequence $(\nu_i)_i$ is equivalent to the unit vector basis of $\ell_1(\mathbb{N})$ and

\item[(ii)] the space $Y = [(\nu_i)_i]$ is complemented in $\mathcal{M}[0,1]$.

\end{itemize}
\end{lem}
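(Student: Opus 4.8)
The plan is to secure both conclusions at once from a single, essentially combinatorial, choice of the points: I take the $t_i$ to be \emph{distinct} and to \emph{avoid every atom of every} $\mu_j$. Since a finite measure on $[0,1]$ has at most countably many atoms, the set $A=\{t\in[0,1]:\mu_j(\{t\})\neq 0\text{ for some }j\inn\}$ is countable, and as $[0,1]$ is uncountable I may pick distinct $t_i\in[0,1]\setminus A$. For $t\in[0,1]$ let $\phi_t:\mathcal{M}[0,1]\to\R$ be the atomic-evaluation functional $\phi_t(\nu)=\nu(\{t\})$. With the above choice $\mu_j(\{t_i\})=0$ for all $i,j$, so $\phi_{t_i}(\nu_j)=\mu_j(\{t_i\})+\de_{t_j}(\{t_i\})$ equals $1$ if $i=j$ and $0$ otherwise; that is, $(\phi_{t_i})_i$ is biorthogonal to $(\nu_j)_j$.

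The key analytic input is that these functionals are uniformly well behaved. Decomposing $\nu\in\mathcal{M}[0,1]$ into its mutually singular atomic and continuous parts $\nu=\nu_a+\nu_c$ gives, for any family of distinct points, $\sum_i|\nu(\{t_i\})|\leqslant\|\nu_a\|\leqslant\|\nu\|$. In particular each $\phi_t$ has norm at most $1$, and any signed combination $\Phi=\sum_i\e_i\phi_{t_i}$ with signs $\e_i\in\{-1,0,1\}$ still satisfies $\|\Phi\|\leqslant 1$, since $|\Phi(\nu)|\leqslant\sum_i|\nu(\{t_i\})|\leqslant\|\nu\|$.

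For (i) the upper $\ell_1$-estimate is immediate from the triangle inequality: writing $K=\sup_i\|\mu_i\|$ we have $\|\nu_i\|\leqslant K+1$, hence $\|\sum_i a_i\nu_i\|\leqslant(K+1)\sum_i|a_i|$. For the lower estimate, given finitely supported scalars $(a_i)$ I test against $\Phi=\sum_i\sgn(a_i)\phi_{t_i}$; by biorthogonality $\Phi\big(\sum_j a_j\nu_j\big)=\sum_j|a_j|$, and since $\|\Phi\|\leqslant 1$ this forces $\|\sum_j a_j\nu_j\|\geqslant\sum_j|a_j|$. Thus $(\nu_i)_i$ is $(K+1)$-equivalent to the unit vector basis of $\ell_1(\N)$. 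For (ii) I define $P:\mathcal{M}[0,1]\to Y$ by $P\nu=\sum_i\nu(\{t_i\})\nu_i$. The series converges absolutely, because $\sum_i|\nu(\{t_i\})|\,\|\nu_i\|\leqslant(K+1)\sum_i|\nu(\{t_i\})|\leqslant(K+1)\|\nu\|$, and its sum lies in $Y$; the same estimate together with the upper $\ell_1$-bound yields $\|P\nu\|\leqslant(K+1)\|\nu\|$. Biorthogonality gives $P\nu_j=\nu_j$ for every $j$, so $P$ restricts to the identity on $\spn\{\nu_i:i\inn\}$ and hence, by continuity, on its closure $Y$. Therefore $P$ is a bounded projection of $\mathcal{M}[0,1]$ onto $Y$, proving that $Y$ is complemented.

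The one genuine point to pin down is the uniform control $\sum_i|\nu(\{t_i\})|\leqslant\|\nu\|$ of the atomic masses of an arbitrary measure. This single estimate is what allows the lone functional $\Phi$ to detect the \emph{full} $\ell_1$-norm in (i) and simultaneously guarantees both the convergence and the boundedness of the candidate projection $P$ in (ii); once it is in hand, every remaining assertion is a formal consequence of the biorthogonality arranged by choosing the $t_i$ outside all atoms.
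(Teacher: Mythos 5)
Your proof is correct and takes essentially the same route as the paper's: the same choice of points $t_i$ avoiding every atom of every $\mu_j$, and the identical projection $\nu\mapsto\sum_i\nu(\{t_i\})\nu_i$. The paper compresses the verification into ``it can be shown''; your uniform estimate $\sum_i|\nu(\{t_i\})|\leqslant\|\nu\|$ for distinct $t_i$, together with the biorthogonality of the evaluation functionals, supplies exactly the missing details.
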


\begin{proof}
Find a sequence $(t_i)_i$ of elements of $[0,1]$ so that $\mu_j(\{t_i\}) = 0$ for all $i,j\inn$. Setting $\nu_i = \mu_i + \de_{t_i}$, it can be shown that $(\nu_i)_i$ is equivalent to the basis of $\ell_1$ and that the map $S\mu = \sum_{i=1}^\infty\mu(\{t_i\})\nu_i$ is a bounded projection onto the space $[(\nu_i)_i]$.
\end{proof}

\begin{lem}\label{good ell1 in dual}
Let $X$ be an infinite dimensional and separable $\mathcal{L}_\infty$-space. Then there exists a sequence $(x_i^*)_i$ in $X^*$ satisfying the following:
\begin{itemize}

\item[(i)] the sequence $(x_i^*)_i$ is equivalent to the unit vector basis of $\ell_1(\mathbb{N})$,

\item[(ii)] there exists a constant $\theta>0$ such that $\theta\|x\| \leqslant \sup_i|x_i^*(x)|$ for all $x\in X$ and

\item[(iii)] the space $Y = [(x_i^*)_i]$ is complemented in $X^*$.

\end{itemize}
\end{lem}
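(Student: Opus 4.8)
The plan is to realize $X^*$ as a complemented subspace of $\mathcal{M}[0,1]$ and then to import into $X^*$ the $\ell_1$-structure furnished by Lemma \ref{good ell1 in measures}. Since $X$ is separable it embeds isometrically into $C[0,1]$, so I regard $X$ as a subspace of $C[0,1]$ and let $R:\mathcal{M}[0,1]=C[0,1]^*\to X^*$ be the restriction (quotient) map. Because $X$ is a $\mathcal{L}_{\infty,\la}$-space it is locally $\la$-complemented in every containing space, and in particular in $C[0,1]$; this classical fact (in the circle of ideas of \cite{LP}, \cite{LS}) provides a bounded linear Hahn--Banach extension operator $E:X^*\to\mathcal{M}[0,1]$ with $R\circ E=\mathrm{id}_{X^*}$. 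Consequently $V:=E[X^*]$ is complemented in $\mathcal{M}[0,1]$ via the projection $Q=E\circ R$ (with $\ker Q=X^\perp$), and $E:X^*\to V$ is an isomorphism whose inverse is $R|_V$.

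Next I would produce a norming sequence sitting over $V$. Fix a countable dense set $(s_n)_n$ of $[0,1]$; since the embedding $X\hookrightarrow C[0,1]$ is isometric, the evaluations satisfy $\sup_n|\de_{s_n}(x)|=\|x\|$ for every $x\in X$. Put $\mu_n:=E(\de_{s_n}|_X)\in V$, a bounded sequence in $\mathcal{M}[0,1]$, and apply Lemma \ref{good ell1 in measures} to $(\mu_n)_n$ to obtain points $(t_n)_n$, none charged by any $\mu_m$, for which $\nu_n=\mu_n+\de_{t_n}$ is equivalent to the $\ell_1$-basis and $[(\nu_n)_n]$ is complemented in $\mathcal{M}[0,1]$ by $S\mu=\sum_n\mu(\{t_n\})\nu_n$. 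I then set $x_n^*:=R\nu_n\in X^*$, so that $x_n^*(x)=x(s_n)+x(t_n)$. The only freedom left by Lemma \ref{good ell1 in measures} is the location of the $t_n$ inside a set of full measure, so I would choose each $t_n$ within distance $2^{-n}$ of $s_n$; since every neighbourhood contains $s_n$ for infinitely many $n$, a routine continuity and density argument then gives $\sup_n|x_n^*(x)|\ge \|x\|$, which is (ii), while boundedness of $R$ yields the upper $\ell_1$-estimate of (i).

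The delicate point, and what I expect to be the main obstacle, is transferring the \emph{lower} $\ell_1$-estimate of (i) and the complementation in (iii) from $\mathcal{M}[0,1]$ to $X^*$. The difficulty is that the perturbing masses $\de_{t_n}$ do not lie in $V$, so $R$ fails to be an isomorphism on $[(\nu_n)_n]$ and neither the $\ell_1$-equivalence of $(\nu_n)_n$ nor the projection $S$ descends automatically to $(x_n^*)_n$. Concretely, the natural candidate $P=R\circ S\circ E:X^*\to [(x_n^*)_n]$, given by $P\eta=\sum_n (E\eta)(\{t_n\})\,x_n^*$, is an honest projection exactly when the matrix with entries $(E x_m^*)(\{t_n\})=(Q\nu_m)(\{t_n\})$ is the identity; in general it differs from the identity by the cross-atoms $(E(\de_{t_m}|_X))(\{t_n\})$ manufactured by $E$. (Note that when $X=C[0,1]$ one has $E=\mathrm{id}$, these cross-atoms are $\de_{nm}$, and everything collapses to Lemma \ref{good ell1 in measures}; it is precisely the non-atom-preservation of a general $E$ that must be controlled.)

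To resolve this I would choose the $t_n$ inductively: once $t_1,\dots,t_{n-1}$ are fixed, the measures $E(\de_{t_1}|_X),\dots,E(\de_{t_{n-1}}|_X)$ carry only countably many atoms, so $t_n$ can be taken close to $s_n$, avoiding the atoms of the $\mu_m$, and so as to make the cross terms summably small, rendering the matrix above a small perturbation of the identity. Inverting that perturbation turns the functionals $\eta\mapsto (E\eta)(\{t_n\})$ into a uniformly $\ell_1$-bounded system biorthogonal to $(x_n^*)_n$, which simultaneously supplies the lower $\ell_1$-estimate in (i) and upgrades $P$ to a bounded projection onto $[(x_n^*)_n]$, giving (iii); complementedness of $[(x_n^*)_n]$ in $X^*$ is then equivalent, through the isomorphism $E$, to complementedness of its image in the complemented subspace $V$ of $\mathcal{M}[0,1]$. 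The quantitative control of the atoms introduced by $E$ is the technical heart of the argument, and it is here that I would lean on the structural description of the duals of separable $\mathcal{L}_\infty$-spaces from \cite{JRZ}, \cite{LS} and \cite{S}.
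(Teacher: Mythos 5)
There is a genuine gap, and it sits exactly where you locate ``the technical heart.'' The matrix $A_{mn}=\bigl(E(\delta_{t_m}|_X)\bigr)(\{t_n\})$ cannot, in general, be made a small perturbation of the identity by any choice of the points $t_n$. Your induction only controls the entries with $m<n$ (choose $t_n$ off the countably many atoms of the measures already constructed); the \emph{diagonal} entries $A_{nn}=\bigl(E(\delta_{t_n}|_X)\bigr)(\{t_n\})$ are dictated by the fixed operator $E$, and nothing makes them close to $1$ --- they can vanish identically on the set of admissible points. Concretely, let $X=c_0$ sit in $C[0,1]$ as the piecewise affine interpolations of sequences on $\{1/j\}_j\cup\{0\}$ vanishing at $0$, and let $E:X^*=\ell_1\to\mathcal{M}[0,1]$ be the isometric extension operator $E(\sum_j a_je_j^*)=\sum_j a_j\delta_{1/j}$, where $e_j^*(x)=x(1/j)$ are the coordinate functionals. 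Then every measure $E(\delta_t|_X)$ is purely atomic with atoms in $\{1/j\}_j$; a dense sequence $(s_m)_m$ makes every $1/j$ an atom of some $\mu_m$, so every admissible $t_n$ avoids $\{1/j\}_j$, and hence $A_{mn}=0$ for \emph{all} $m,n$: the matrix is zero, not near the identity. Moreover the failure is not an artifact of the proof strategy: $x_n^*=R\nu_n=\delta_{s_n}|_X+\delta_{t_n}|_X$ does not depend on $E$ at all, and $t\mapsto\delta_t|_X$ is norm-continuous (piecewise affine) on $(0,1]$, so density of $(s_n)_n$ together with $|t_n-s_n|\leqslant 2^{-n}$ forces norm-Cauchy subsequences of $(x_n^*)_n$; thus (i) genuinely fails, and with it (iii). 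The root cause is that $R$ is not injective and the new atoms $\delta_{t_n}$ do not lie in $V=E[X^*]$: both the lower $\ell_1$-estimate for $(\nu_n)_n$ and the projection $S$ live entirely on those atoms, so $R$ destroys them (note also that $Ex_n^*=\mu_n+E(\delta_{t_n}|_X)\neq\nu_n$, so complementation of $[(\nu_n)_n]$ does not transfer through $E$ either). Your opening step (existence of $E$ by local complementation of $\mathcal{L}_\infty$-spaces) and your verification of (ii) are fine; everything downstream of them is not.

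The paper avoids all of this by invoking Stegall's dichotomy (\cite{S}, see also \cite{LS}): the dual of a separable infinite dimensional $\mathcal{L}_\infty$-space is isomorphic, by an \emph{onto} isomorphism, either to $\ell_1$ or to $\mathcal{M}[0,1]$. In the first case the lemma is immediate, taking an $\ell_1$-basis of all of $X^*$. In the second, with $T:X^*\to\mathcal{M}[0,1]$ onto, a normalized norming sequence $(z_i^*)_i$ and $C>\|T^{-1}\|$, one applies Lemma \ref{good ell1 in measures} to $\mu_i=CTz_i^*$ and sets $x_i^*=T^{-1}\nu_i=Cz_i^*+T^{-1}\delta_{t_i}$. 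Surjectivity of $T$ is the whole point: the point masses are pulled back \emph{into} $X^*$, so the $\ell_1$-equivalence and the complementation transfer through the isomorphism $T$, while $\sup_i|x_i^*(x)|\geqslant(C-\|T^{-1}\|)\|x\|$ gives (ii) with $\theta=C-\|T^{-1}\|$. Your relation $R\circ E=\mathrm{id}_{X^*}$ (complementation of $X^*$ in $\mathcal{M}[0,1]$) is strictly weaker than surjectivity --- necessarily so whenever $X^*\cong\ell_1$, as in the example above --- and any repair of your argument would have to import Stegall's dichotomy, at which point the $E$/$R$ machinery becomes superfluous.
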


\begin{proof}
As it is shown in \cite{S}, the dual of $X$ is either isomorphic to $\ell_1(\mathbb{N})$ or to $\mathcal{M}[0,1]$ (see also \cite{LS}). In the first case, just choose a Schauder basis $(x_i^*)_i$ of $X^*$ which is equivalent to the unit vector basis of $\ell_1(\mathbb{N})$.

In the second case, let $T:X^*\rightarrow\mathcal{M}[0,1]$ be an onto isomorphism and choose a normalized sequence $(z_i^*)_i$ in $X^*$ such that $\|x\| = \sup_i|z_i^*(x)|$ for all $x\in X$. Fix $C > \|T^{-1}\|$ and apply Lemma \ref{good ell1 in measures} to the sequence $(\mu_i)_i$ with $\mu_i = CTz_i^*$ for all $i\inn$ to find a sequence $(t_i)_i$ in $[0,1]$ such that if $\nu_i = \mu_i + \de_{t_i}$ for all $i\inn$ then $(\nu_i)_i$ satisfies the conclusion of that lemma. Setting $x_i^* = T^{-1}\nu_i$ for all $i\inn$, it is not hard to check that, for $\theta = C - \|T^{-1}\|$, the sequence $(x_i^*)_i$ is the desired one.
\end{proof}

\begin{lem}\label{good bounded ap}
Let $X$ be a $\mathcal{L}_{\infty,\la}$-space, $Y$ be a subspace of $X^*$ and assume that there exists a constant $\theta>0$ so that for all $x\in X$, $\theta\|x\| \leqslant \sup\{|y^*(x)|:\;y^*\in B_{Y^*}\}.$
Then for every finite dimensional subspace $F$ of $X$ and every $\e>0$ there exists a finite rank operator $T:X\rightarrow X$ satisfying the following:
\begin{itemize}

\item[(i)] $\|Tx - x\| \leqslant \e\|x\|$ for all $x\in F$,

\item[(ii)] $\|T\| \leqslant \la/\theta$,

\item[(iii)] $T^*[X^*] \subset Y$.

\end{itemize}
In particular, if $X$ is separable then there exits a sequence of finite rank operators $T_n:X\rightarrow X$ with $T_nx\rightarrow x$ for all $x\in X$ and $T_n^*[X^*] \subset Y$ for all $n\inn$.
\end{lem}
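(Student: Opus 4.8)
The plan is to construct $T$ explicitly from a well-chosen finite-dimensional $\ell_\infty^n$-superspace of $F$ and then to perturb its adjoint into $Y$ using the norming hypothesis. First, using that $X$ is $\mathcal{L}_{\infty,\la}$, fix a finite-dimensional $G$ with $F\subseteq G\subseteq X$ together with an isomorphism $R:G\to\ell_\infty^n$, $n=\dim G$, normalised so that $\|R\|\leqslant 1$ and $\|R^{-1}\|\leqslant\la$. Put $u_k=R^{-1}e_k$ and $u_k^*=e_k^*\circ R\in G^*$; these form a biorthogonal system with $x=\sum_{k=1}^n u_k^*(x)u_k$ for $x\in G$, with $\|u_k^*\|\leqslant 1$, and with the estimate $\|y\|\leqslant\la\max_k|u_k^*(y)|$ for all $y\in G$. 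Extending each $u_k^*$ by Hahn--Banach to $\tilde u_k^*\in X^*$ of the same norm yields the finite-rank operator $T_0=\sum_k\tilde u_k^*\otimes u_k$, which is the identity on $G$ but whose adjoint ranges in $\spn\{\tilde u_k^*\}$ and need not land in $Y$.

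The key step is to replace the $\tilde u_k^*$ by elements of $Y$. The norming hypothesis $\sup\{|y^*(x)|:y^*\in B_Y\}\geqslant\theta\|x\|$ is equivalent, by Hahn--Banach separation in $(X^*,w^*)$ (whose dual is $X$), to the inclusion $\theta B_{X^*}\subseteq\overline{B_Y}^{\,w^*}$: a functional of norm $\leqslant\theta$ lying outside the $w^*$-closed symmetric convex set $\overline{B_Y}^{\,w^*}$ would be strictly separated by some $x\in X$, contradicting the inequality. Hence each $\tilde u_k^*$ belongs to the $w^*$-closure of $\theta^{-1}B_Y$. As I only need agreement on the finite-dimensional $G$, a $w^*$-neighbourhood argument produces $w_k^*\in Y$ with $\|w_k^*\|\leqslant\theta^{-1}$ and $|w_k^*(u_j)-\tilde u_k^*(u_j)|$ arbitrarily small on the basis $(u_j)_j$, so that $\|(w_k^*-\tilde u_k^*)|_G\|\leqslant\de$ for any prescribed $\de>0$.

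Now set $T=\sum_{k=1}^n w_k^*\otimes u_k$. Then $T$ has rank $\leqslant n$ and $T^*\phi=\sum_k\phi(u_k)w_k^*\in\spn\{w_k^*\}\subseteq Y$, giving (iii). Using biorthogonality, $u_k^*(Tx)=w_k^*(x)$, so $\|Tx\|\leqslant\la\max_k|w_k^*(x)|\leqslant(\la/\theta)\|x\|$, giving (ii). For (i), on $F\subseteq G$ we have $T_0x=x$, whence $Tx-x=\sum_k(w_k^*-\tilde u_k^*)(x)u_k\in G$ and $\|Tx-x\|\leqslant\la\max_k|w_k^*(x)-\tilde u_k^*(x)|\leqslant\la\de\|x\|$; taking $\de=\e/\la$ finishes. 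I expect the main obstacle to be exactly the middle paragraph: turning the scalar norming inequality into the $w^*$-density statement and then executing the simultaneous finite-dimensional approximation of the $\tilde u_k^*$ while keeping both the uniform bound $\|w_k^*\|\leqslant\theta^{-1}$ and the closeness on $G$; the rest is bookkeeping with the biorthogonal system.

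For the separable case, exhaust $X$ by an increasing sequence $F_1\subseteq F_2\subseteq\cdots$ of finite-dimensional subspaces with $\overline{\cup_n F_n}=X$, and apply the first part to $F_n$ with $\e=1/n$ to obtain $T_n$ with $\|T_n x-x\|\leqslant\frac1n\|x\|$ on $F_n$, $\|T_n\|\leqslant\la/\theta$, and $T_n^*[X^*]\subseteq Y$. A standard three-term estimate, using the density of $\cup_n F_n$ together with the uniform bound $\|T_n\|\leqslant\la/\theta$, then gives $T_n x\to x$ for every $x\in X$, while $T_n^*[X^*]\subseteq Y$ holds for all $n$ by construction.
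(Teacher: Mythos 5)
Your proposal is correct and follows essentially the same route as the paper: pass to an $\ell_\infty^n$-superspace, take Hahn--Banach extensions of the coordinate (biorthogonal) functionals, use a separation theorem to get $B_{X^*}\subset \theta^{-1}\overline{B_Y}^{w^*}$, perturb the extended functionals into $\theta^{-1}B_Y$ with small error on the finite-dimensional subspace, and define $T$ as the corresponding finite rank operator. The only differences are cosmetic (keeping $F\subseteq G$ distinct and normalising the isomorphism so $\|R\|\leqslant 1$, where the paper relabels the superspace as $F$ and carries the constants $\|A\|,\|A^{-1}\|$ through the estimates).
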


\begin{proof}
Let $F$ be a finite dimensional subspace of $X$ and $\e>0$. By passing to a larger subspace, we may assume that $F = \langle\{y_i:\;i=1,\ldots,n\}\rangle$, where the map $A:\ell_\infty^n\rightarrow F$ with $Ae_i = y_i$ is invertible with $\|A\|\|A^{-1}\| \leqslant \la$. By the Hahn-Banach theorem there is a sequence $(y_i^*)_{i=1}^n$ in $X^*$, biorthogonal to $(y_i)_{i=1}^n$, such that $\|y_i^*\| \leqslant \|A^{-1}\|$ for $i=1,\ldots,n$. A separation theorem yields that $B_{X^*}\subset \theta^{-1}\overline{B_{Y}}^{w^*}$ and hence, we may choose $\tilde{y}_i^*\in \left(\theta^{-1}\|A^{-1}\|\right)B_Y$ such that $\left\|(\tilde{y}_i^* - y_i^*)|_{F}\right\| < \e/\|A\|$ for $i=1,\ldots,n$. Define $T:X\rightarrow X$ with $Tx = \sum_{i=1}^n\tilde{y}_i^*(x)y_i$. Some standard calculations yield that $T$ is the desired operator.
\end{proof}

The following terminology is from \cite{JRZ}. If $E_1, E_2$ are subspaces of a Banach space $X$ and $\e>0$, we say that $E_2$ is $\e$-close to $E_1$, if there is an invertible operator $T$ from $E_1$ onto $E_2$ with $\|Tx - x\| \leqslant \e\|x\|$ for all $x\in E_1$. Note that if $E_2$ is $\e$-close to $E_1$, then $E_1$ is $\e/(1-\e)$-close to $E_2$.

\begin{lem}\label{almost there}
Let $X$ be a Banach space and $(x_i^*)_{i=1}^\infty$ be a sequence in $X^*$ which is equivalent to the unit vector basis of $\ell_1(\mathbb{N})$. Assume moreover that there exists a sequence of bounded finite rank projections $(Q_n)_n$ satisfying the following:
\begin{itemize}

\item[(i)] $Q_n:X\rightarrow X$, $Q_n[X] \subset Q_{n+1}[X]$ for all $n\inn$ and $Q_nx\rightarrow x$ for all $x\in X$.

\item[(ii)] There is a strictly increasing sequence of natural numbers $(k_n)_n$ and $0<\e<1$ with $\e/(1-\e)<(\sup_n\|Q_n\|)^{-1}$ such that $Q^*_n[X^*]$ is $\e$-close to the space $\langle\{x_i^*:\;i=1,\ldots,k_n\}\rangle$ for all $n\inn$.

\end{itemize}
Then $X$ is isomorphic to a Bourgain-Delbaen space.
\end{lem}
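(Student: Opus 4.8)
The plan is to produce the Bourgain-Delbaen structure through the reformulation in Remark \ref{rephrase script l infinity}: I will isomorphically embed $X$ into $\ell_\infty(\N)$ so that its image carries an increasing sequence of finite dimensional subspaces on which the coordinate restrictions are uniformly onto isomorphisms. Fix constants $0<a\leqslant b$ witnessing the $\ell_1$-equivalence, i.e. $a\sum_i|\la_i|\leqslant\|\sum_i\la_i x_i^*\|\leqslant b\sum_i|\la_i|$ for all finitely supported scalars $(\la_i)$, and set $K=\sup_n\|Q_n\|$ and $\de=\e/(1-\e)$; note that $\|x_i^*\|\leqslant b$ for every $i$, and that hypothesis (ii) gives $K\de<1$.

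First I would define the operator $J:X\to\ell_\infty(\N)$ by $J(x)=(x_i^*(x))_i$. It is bounded with $\|Jx\|\leqslant b\|x\|$, and the crucial point is that $J$ is bounded below. To see this, observe that for every $n$ and $\phi\in B_{X^*}$ one has $\phi(Q_nx)=(Q_n^*\phi)(x)$ with $Q_n^*\phi\in Q_n^*[X^*]$ and $\|Q_n^*\phi\|\leqslant K$, so $\|Q_nx\|\leqslant K\sup\{|\psi(x)|:\psi\in B_{Q_n^*[X^*]}\}$. Passing through the $\e$-closeness of $Q_n^*[X^*]$ to $V_n:=\langle\{x_i^*:i\leqslant k_n\}\rangle$ — more precisely, using the invertible $S_n:Q_n^*[X^*]\to V_n$ with $\|S_n\psi-\psi\|\leqslant\de\|\psi\|$ furnished by the remark preceding the lemma — and bounding elements of $V_n$ by the $\ell_1$-lower estimate, each $\psi\in B_{Q_n^*[X^*]}$ satisfies $|\psi(x)|\leqslant\frac{1+\de}{a}\|Jx\|+\de\|x\|$. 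Hence $\|Q_nx\|\leqslant K\big(\frac{1+\de}{a}\|Jx\|+\de\|x\|\big)$, and letting $n\to\infty$ (using $Q_nx\to x$) together with $K\de<1$ yields $\|x\|\leqslant\frac{K(1+\de)}{a(1-K\de)}\|Jx\|$. Thus $Z:=J[X]$ is a closed subspace of $\ell_\infty(\N)$ isomorphic to $X$.

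Next I would set up the filtration. Let $\Ga=\N$, $\Ga_q=\{1,\ldots,k_q\}$ (strictly increasing and finite) and $Y_q=J[Q_q[X]]$. Since $Q_q[X]\subset Q_{q+1}[X]$ and $Q_qx\to x$, the $Y_q$ form an increasing sequence of subspaces of $\ell_\infty(\N)$ with $\overline{\cup_qY_q}=Z$. It then remains to check that each restriction $r_q:Y_q\to\ell_\infty(\Ga_q)=\ell_\infty^{k_q}$ is an onto isomorphism with a constant independent of $q$. Transporting along $J$, this amounts to showing that $\Phi_q:Q_q[X]\to\ell_\infty^{k_q}$, $z\mapsto(x_i^*(z))_{i\leqslant k_q}$, is a uniform isomorphism. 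A rank computation gives $\dim Q_q[X]=\dim Q_q^*[X^*]=\dim V_q=k_q$, so it suffices to bound $\Phi_q$ below, as injectivity then forces surjectivity. The upper bound $\|\Phi_qz\|\leqslant b\|z\|$ is immediate, and repeating the estimate of the previous paragraph with $z\in Q_q[X]$ in place of $x$ — where now $S_q\psi\in V_q$ lets one replace $\|Jz\|$ by $\max_{i\leqslant k_q}|x_i^*(z)|=\|\Phi_qz\|$ — yields $\|z\|\leqslant\frac{K(1+\de)}{a(1-K\de)}\|\Phi_qz\|$. The resulting uniform constant $C$ feeds Remark \ref{rephrase script l infinity}, which identifies $Z=\overline{\cup_qY_q}$ as a Bourgain-Delbaen space; since $Z$ is isomorphic to $X$, the proof is complete.

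The heart of the argument, and the step I expect to be most delicate, is the pair of lower estimates: one must convert the $\e$-closeness of the \emph{ranges of the adjoints} $Q_n^*[X^*]$ to the $\ell_1$-spans $V_n$ into a genuine lower bound for $J$ (and for each $\Phi_q$), and the whole scheme only closes up because the quantitative hypothesis $\e/(1-\e)<K^{-1}$ guarantees $K\de<1$, absorbing the error term $K\de\|x\|$ on the left-hand side. The dimension bookkeeping ensuring $\dim Q_q[X]=k_q$, and hence the surjectivity of $r_q|_{Y_q}$, is the other point that must be handled with care.
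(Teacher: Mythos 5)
Your proposal is correct and follows essentially the same route as the paper's proof: the same embedding $x\mapsto(x_i^*(x))_i$ into $\ell_\infty(\mathbb{N})$, the same choice $\Ga_q=\{1,\ldots,k_q\}$ and $Y_q$ as the image of $Q_q[X]$, the same rank computation $\dim Q_q[X]=\dim Q_q^*[X^*]=k_q$ to get surjectivity of the restrictions, and the same appeal to Remark \ref{rephrase script l infinity}. The only (cosmetic) difference is in deriving the lower estimate: the paper uses a Hahn--Banach norming functional pushed through the $\e$-closeness to produce a single large coordinate $|x_{i_0}^*(x)|$, while you take the supremum over the ball of $Q_n^*[X^*]$ and absorb the error term using $\e/(1-\e)<(\sup_n\|Q_n\|)^{-1}$ --- the same quantitative mechanism in different clothing.
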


\begin{proof}
Note that (i) implies that $\cup_n Q_n[X]$ is dense in $X$. Define a linear operator $U: X\rightarrow \ell_\infty(\mathbb{N})$ with $Ux = (x_i^*(x))_i$ for all $x\in X$, evidently $U$ is bounded. Set $\Ga_n = \{1,\ldots,k_n\}$ for all $n\inn$ and $Y_n = UQ_n[X]$. We shall prove that $U$ is an isomorphic embedding and that the restriction operators onto the first $k_n$ coordinates $r_{k_n}: Y_n\rightarrow \ell_\infty(\{1,\ldots,k_n\})$ are onto $C$-isomorphisms for all $n\inn$, for a uniform constant $C$. Given the aforementioned facts and Remark \ref{rephrase script l infinity}, the desired result follows easily.

Since $\cup_n Q_n[X]$ is dense in $X$, to show that $U$ is an isomorphic embedding, it is enough to find a uniform constant $c > 0$, such that $\|Ux\| \geqslant c\|x\|$ for every $x\in Q_n[X]$ and for every $n\inn$. To this end let $S:[(x_i^*)_i]\rightarrow \ell_1(\mathbb{N})$ be the onto isomorphism with $Sx_i^* = e_i$ for all $i\inn$. Let $n\inn$ and $x\in Q_n[X]$. The Hahn-Banach Theorem and the fact that $\langle\{x_i^*:\;i=1,\ldots,k_n\}\rangle$ is $\e/(1-\e)$-close to $Q^*_n[X^*]$, in conjunction with some tedious computations, yield that there exists an $i_0\in\{1,\ldots,k_n\}$ such that
\begin{equation}\label{ahmahgahd}
|x_{i_0}^*(x)| \geqslant (1-\e)\frac{1-\frac{\e}{1-\e}\|Q_n\|}{\|S\|\|Q_n\|}\|x\|.
\end{equation}
Setting $c = (1-\e)(1 - (\e/(1-\e))\sup_k\|Q_k\|)/(\|S\|\sup_k\|Q_k\|)$ we conclude $\|Ux\| \geqslant c\|x\|$.

It remains to show that there a exists a constant $C>0$ such that $r_{k_n}: Y_n\rightarrow \ell_\infty(\{1,\ldots,k_n\})$ is an onto $C$-isomorphism for every $n\inn$.
Let $n\inn$ and $z\in Y_n$. Set $x = U^{-1}z\in Q_n[X]$. Then by \eqref{ahmahgahd} there is an $i_0\in\{1,\ldots,k_n\}$ such that:
 $$\|z\| \geqslant \|r_{k_n}(z)\| \geqslant |x_{i_0}^*(x)| \geqslant c\|x\|\geqslant \frac{c}{\|U\|}\|z\|.$$
 Setting $C = \|U\|/c$ we conclude that $r_{k_n}|_{Y_n}$ is a $C$-isomorphic embedding. Finally, observe that $\dim Y_n = \dim Q_n[X] = \dim Q_n^*[X^*] = k_n$ and hence $r_{k_n}|_{Y_n}$ is also onto.
\end{proof}

The following lemma and its proof can be found in \cite[Lemma 4.3]{JRZ}.

\begin{lem}\label{Johnson Rosenthal Zippin}
Let $X$ be a separable Banach space and $Y$ be a separable subspace of $X^*$. Assume moreover that $(P_n)_n$, $(T_n)_n$ are bounded finite rank operators satisfying the following conditions:
\begin{itemize}

\item[(a)] $P_n:X\rightarrow X$, $P_n^*[X^*] \subset Y$ and $T_n: X^*\rightarrow Y$ for all $n\inn$.

\item[(b)] $P_nx\rightarrow x$ for all $x\in X$, $T_ny\rightarrow y$ for all $y\in Y$ and $\sup_n\|T_n\| < \infty$.

\item[(c)] The operators $(T_n)_n$ are projections.

\end{itemize}
If $E$, $F$ are finite dimensional subspaces of $X$, $X^*$ respectively and $0<\e<1$, then there exists a projection $Q:X\rightarrow X$ with finite dimensional range such that:
\begin{itemize}

\item[(i)] $Qe = e$ for all $e\in E$,

\item[(ii)] $Q^*f = f$ for all $f\in F$,

\item[(iii)] $Q^*[X^*] \subset Y$,

\item[(iv)] $\|Q\| \leqslant 2c + 2K + 4cK$ where $c = \sup_n\|P_n\|$ and $K = \sup_n\|T_n\|$,

\item[(v)] $Q^*[X^*]$ is $\e$-close to $T_n[X^*]$ for some integer $n$.

\end{itemize}
\end{lem}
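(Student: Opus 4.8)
The plan is to build $Q$ by splicing a projection on $X$ that handles the subspace $E$ with a projection on $X^*$ that handles $F$, the delicate point being that the latter must be realised as a genuine \emph{adjoint} so that it comes from an operator on $X$. First note that conditions (ii) and (iii) together force $F\subseteq Y$: if $Q^*f=f$ and $Q^*[X^*]\subseteq Y$, then $f=Q^*f\in Y$; so throughout I treat $F$ as a finite dimensional subspace of $Y$, which is exactly how the lemma is applied in Lemma~\ref{almost there}. The scheme is: (1) choose indices $m$ and then $n\gg m$ so that $T_m$ nearly fixes $F$ (possible since $F\subseteq Y$ and $T_ny\to y$) and $P_n$ is nearly the identity on $E$ and on a finite set of vectors witnessing the action of $T_m$; (2) manufacture from $T_m$ and $P_n^*$ a w$^*$-continuous finite rank projection $S$ on $X^*$ with range in $Y$ and fixing $F$; (3) take $Q$ to be built from the pre-adjoint of $S$, corrected by a finite rank term so that in addition $Qe=e$ on $E$; (4) verify the norm bound and the $\e$-closeness.

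The key observation driving step (2) is that, while the $T_n$ are not assumed to be w$^*$-continuous, any finite rank operator of the form $V\circ P_n^*$ is: writing $P_n=\sum_k \psi_k\otimes x_k$ with $x_k\in X$ and $\psi_k\in P_n^*[X^*]\subseteq Y$, one has $VP_n^* x^*=\sum_k \langle x^*,x_k\rangle\, V\psi_k$, whose coefficient functionals are evaluations at the $x_k\in X$ and hence w$^*$-continuous. In particular $S_0:=T_m\circ P_n^*$ is a w$^*$-continuous finite rank operator with range in $Y$, so $S_0=R^*$ for a finite rank $R:X\to X$. For $n$ large, $P_n^*$ acts as the identity, up to arbitrarily small error, on the finite dimensional spaces attached to $T_m$, so $S_0$ is within a prescribed tolerance of the projection $T_m$ on the relevant finite dimensional blocks; a standard finite dimensional perturbation then upgrades $S_0$ to an honest w$^*$-continuous projection $S$ with $S[X^*]\subseteq Y$ and $Sf=f$ for all $f\in F$, the latter arranged by absorbing a biorthogonal correction supported on $F\subseteq Y$.

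With $S=Q_2^*$ in hand I would set $Q$ to be a splice of the form $Q=Q_1+Q_2-Q_1Q_2$, where $Q_1$ is a finite rank projection on $X$, built from $P_n$ and a biorthogonal correction, satisfying $Q_1e=e$ for $e\in E$ with range contained in $P_n[X]$ and with coefficient functionals chosen in $Y$ (so that $Q_1^*[X^*]\subseteq Y$), and $Q_2$ is the pre-adjoint of $S$. Choosing the two pieces so that the product relation making $Q=Q_1+Q_2-Q_1Q_2$ idempotent holds, $Q$ is a finite rank projection, $Qe=e$ on $E$ because $Q_1$ fixes $E$ and $Q_2$ does not disturb it, and $Q^*f=f$ on $F$ because $S=Q_2^*$ fixes $F$; moreover $Q^*[X^*]\subseteq Q_1^*[X^*]+Q_2^*[X^*]\subseteq Y$. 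The norm estimate then comes straight from the triangle inequality $\|Q\|\le\|Q_1\|+\|Q_2\|+\|Q_1\|\|Q_2\|$ together with the bounds $\|Q_1\|\le 2c$ and $\|Q_2\|\le 2K$ coming from the perturbed building blocks, which yields $\|Q\|\le 2c+2K+4cK$. For (v), the operator $S$, and with it $Q^*$, differs from $T_m$ only by the arbitrarily small finite dimensional corrections introduced in steps (1)--(3); consequently $Q^*[X^*]$ is $\e$-close to $T_m[X^*]$, the natural linear bijection carrying each perturbed generator to its $T_m$-counterpart being within $\e$ of the identity, so (v) holds with $n=m$.

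The main obstacle is precisely step (2): the construction must produce a projection \emph{on $X$}, equivalently a w$^*$-continuous projection on $X^*$ whose range lies in $Y$. The operators $T_n$ supply norm approximation of the identity on $Y$ but are not adjoints, so neither $T_m$ nor $T_m^*|_X$ can be used directly; the whole point of composing with $P_n^*$ on the right is to restore w$^*$-continuity, while the finite rank, near-identity behaviour of $P_n^*$ on the blocks of $T_m$ keeps us close enough to a genuine projection to perturb. Executing this so that all five requirements — exact fixing of $E$ and $F$, range in $Y$, the explicit norm bound, and $\e$-closeness to $T_m[X^*]$ — hold simultaneously, and checking the idempotence of the splice $Q_1+Q_2-Q_1Q_2$ together with the interaction of the two biorthogonal corrections, is the technical heart of the argument.
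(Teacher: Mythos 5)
Your proposal stands or falls on one specific claim, and it is the load-bearing one: that ``for $n$ large, $P_n^*$ acts as the identity, up to arbitrarily small error, on the finite dimensional spaces attached to $T_m$'' (i.e.\ on $\ran T_m$ and on $F$). The hypothesis $P_nx\to x$ for all $x\in X$ yields only weak$^*$ convergence $P_n^*x^*\to x^*$; it gives no norm convergence of $P_n^*$ on any finite dimensional subspace of $X^*$, and nothing else in (a)--(c) supplies it. Concretely, take $X=c_0$, $Y=X^*=\ell_1$, let $T_m:\ell_1\to\ell_1$ be the norm-one projections $T_my=\sum_{i<m}y_ie_i+\left(\sum_{i\geqslant m}y_i\right)e_m$ (so $T_m^2=T_m$, $T_my\to y$ on $\ell_1$, $\ran T_m=\langle e_1,\ldots,e_m\rangle$), and let $P_nx=(x_1+x_{j_n})e_1+\sum_{i=2}^nx_ie_i$ with $j_n>n$. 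All hypotheses hold ($\|P_n\|\leqslant 2$, $P_nx\to x$), yet $P_n^*e_1=e_1+e_{j_n}$, hence $T_mP_n^*e_1=e_1+e_m$ for every $n\geqslant m$: your $S_0=T_m\circ P_n^*$ stays at distance exactly $1$ from $T_m$ on $\ran T_m$ no matter how large $n$ is. So $S_0$ is \emph{not} a small perturbation of $T_m$, the ``standard finite dimensional perturbation'' upgrading $S_0$ to a projection fixing $F$ (take $F=\langle e_1\rangle$) is unavailable, and conclusions (ii), (iv) and (v) collapse with it. Your remaining ingredients are sound --- the observation that $F\subseteq Y$ is forced, the weak$^*$-continuity of $V\circ P_n^*$ for any bounded $V$, and the splice $Q_1+Q_2-Q_1Q_2$ matching the bound $2c+2K+4cK$ --- but they all sit downstream of the false approximation.

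The repair is to reverse your quantifiers and use the convergence the hypotheses actually provide, namely that $T_n\to I$ strongly on $Y$ \emph{in norm}. Fix the $P$-index $m$ first: since $P_m^*$ is a fixed finite rank operator with $P_m^*[B_{X^*}]$ a bounded subset of a finite dimensional subspace of $Y$, and $(T_n)$ is uniformly bounded and converges pointwise on $Y$, one gets $\|T_nP_m^*-P_m^*\|\to 0$ as $n\to\infty$. Writing $P_m=\sum_k\psi_k\otimes x_k$ with $\psi_k\in Y$, the pre-adjoint of $T_nP_m^*$ is $\sum_k(T_n\psi_k)\otimes x_k$, an operator on $X$ that is norm-close to $P_m$ for large $n$ and whose adjoint is weak$^*$-continuous with range inside $\ran T_n\subseteq Y$. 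These are the correct building blocks from which the projection $Q$ (fixing $E$, with $Q^*$ fixing $F$ and $\ran Q^*$ $\e$-close to $\ran T_n$) is assembled by perturbation and splicing steps of the kind you envisaged; in your scheme the $T$-index is frozen and the $P$-index grows, which forces an appeal to strong convergence of $P_n^*$ that simply is not there. Note also that the paper itself offers no proof to compare against: it quotes the statement verbatim from \cite[Lemma 4.3]{JRZ}, and it is that argument which proceeds along the lines just indicated.
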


\begin{thm}\label{separable script l infinities are bd spaces}
Every separable infinite dimensional $\mathcal{L}_\infty$-space is isomorphic to a Bourgain-Delbaen space.
\end{thm}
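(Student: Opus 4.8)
The plan is to assemble the theorem from the preceding lemmas, using the Johnson--Rosenthal--Zippin Lemma \ref{Johnson Rosenthal Zippin} as the engine and Lemma \ref{almost there} as the final packaging. Let $X$ be separable, infinite dimensional and $\mathcal{L}_{\infty,\la}$ for some $\la$. First I would invoke Lemma \ref{good ell1 in dual} to produce a sequence $(x_i^*)_i$ in $X^*$ that is equivalent to the unit vector basis of $\ell_1(\N)$, that is norming for $X$ with some constant $\theta>0$, and whose closed span $Y=[(x_i^*)_i]$ is complemented in $X^*$ by a bounded projection $R:X^*\to Y$. Since the $(x_i^*)_i$ are bounded, the norming estimate $\theta\|x\|\leqslant\sup_i|x_i^*(x)|$ shows that $Y$ is norming for $X$ with some constant $\theta'>0$, which is precisely the hypothesis required to apply Lemma \ref{good bounded ap}; doing so gives uniformly bounded finite rank operators $(P_n)_n$ with $P_nx\to x$ for all $x\in X$ and $P_n^*[X^*]\subset Y$.

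Next I would manufacture the second family $(T_n)_n$ required by Lemma \ref{Johnson Rosenthal Zippin}. Because $(x_i^*)_i$ is equivalent to the $\ell_1$-basis, the canonical partial sum projections $R_n:Y\to\langle\{x_i^*:i\leqslant n\}\rangle$ are uniformly bounded projections on $Y$ with $R_ny\to y$ for every $y\in Y$. Setting $T_n=R_n\circ R:X^*\to Y$ produces finite rank operators with $T_n[X^*]=\langle\{x_i^*:i\leqslant n\}\rangle$ and $\sup_n\|T_n\|<\infty$; since $R$ fixes $Y$ pointwise, the one line computation $T_n^2=R_nRR_nR=R_nR_nR=R_nR=T_n$ shows the $T_n$ are projections, and $T_ny=R_ny\to y$ for $y\in Y$. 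Thus $(P_n)_n$ and $(T_n)_n$ satisfy conditions (a)--(c) of Lemma \ref{Johnson Rosenthal Zippin}, and the bound (iv) of that lemma yields a single constant $M=2c+2K+4cK$, with $c=\sup_n\|P_n\|$ and $K=\sup_n\|T_n\|$, controlling every projection it produces. I would then fix once and for all an $\e\in(0,1)$ small enough that $\e/(1-\e)<M^{-1}$.

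With these data fixed, the core of the argument is an inductive construction of projections $(Q_n)_n$ satisfying the hypotheses of Lemma \ref{almost there}. Fix a dense sequence $(w_k)_k$ in $X$. At stage $n$, apply Lemma \ref{Johnson Rosenthal Zippin} with this same $\e$ to the finite dimensional subspaces $E_n=Q_{n-1}[X]+\langle\{w_1,\ldots,w_n\}\rangle\subset X$ and $F_n=Q_{n-1}^*[X^*]+\langle\{x_i^*:i\leqslant n\}\rangle\subset X^*$, obtaining a projection $Q_n$ with $\|Q_n\|\leqslant M$. Property (i) ($Q_n$ fixes $E_n$) guarantees $Q_{n-1}[X]\subset Q_n[X]$ and, via the dense set together with the uniform bound $M$, the pointwise convergence $Q_nx\to x$; property (ii) ($Q_n^*$ fixes $F_n$) forces $\langle\{x_i^*:i\leqslant n\}\rangle\subset Q_n^*[X^*]$; and property (v) gives that $Q_n^*[X^*]$ is $\e$-close to $T_{m_n}[X^*]=\langle\{x_i^*:i\leqslant m_n\}\rangle$ for some integer $m_n$. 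Since $\e$-close subspaces have equal dimension, $m_n=\dim Q_n^*[X^*]\geqslant n$, so $m_n\to\infty$, and after passing to a subsequence of $(Q_n)_n$ (which preserves increasing ranges, pointwise convergence, and the closeness property) I may assume $(m_n)_n$ is strictly increasing. Taking $k_n=m_n$, all hypotheses of Lemma \ref{almost there} hold, and that lemma concludes that $X$ is isomorphic to a Bourgain--Delbaen space.

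The delicate point is the simultaneous bookkeeping in the induction: I must arrange $E_n$ and $F_n$ so that the ranges $Q_n[X]$ grow, the identity is approximated pointwise, and the closeness targets $\langle\{x_i^*:i\leqslant m_n\}\rangle$ exhaust the basis of $Y$ with strictly increasing dimension, all while keeping $\e$ fixed and below the threshold dictated by the uniform bound $M$ from Lemma \ref{Johnson Rosenthal Zippin}. Everything else---verifying that $Y$ is norming, the projection identity for $T_n$, and the dimension count $m_n\geqslant n$---is routine once these choices are made.
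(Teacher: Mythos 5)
Your proposal is correct and follows essentially the same route as the paper: Lemma \ref{good ell1 in dual} and Lemma \ref{good bounded ap} supply the sequence $(x_i^*)_i$ and the operators $(P_n)_n$, the operators $T_n$ are built by composing the basis projections with the projection onto $Y$, Lemma \ref{Johnson Rosenthal Zippin} is applied recursively to produce the projections $(Q_n)_n$, and Lemma \ref{almost there} concludes. The only deviations are harmless bookkeeping choices inside the recursion: the paper takes $E_n = P_n[X] + Q_{n-1}[X]$ and $F_n = \{0\}$, deducing $k_n\to\infty$ from $\dim Q_n[X]\to\infty$, whereas you take $E_n$ built from a dense sequence and $F_n \supset \langle\{x_i^*:\;i\leqslant n\}\rangle$ to force $m_n\geqslant n$; both are valid ways to meet the hypotheses of Lemma \ref{almost there}.
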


\begin{proof}
Let $X$ be a separable infinite dimensional $\mathcal{L}_\infty$-space.
By Lemma \ref{good ell1 in dual}, there exists a sequence $(x_i^*)_i$ in $X^*$ which is equivalent to the unit vector basis of $\ell_1(\mathbb{N})$, a constant $\theta>0$ such that $\theta\|x\| \leqslant \sup_i|x_i^*(x)|$ for all $x\in X$ and a bounded linear projection $P: X^*\rightarrow Y = [(x_i^*)_i]$. For $n\inn$ define $T_n:X^*\rightarrow Y$ with $T_n =  S_n\circ P$, where $S_n: Y\rightarrow Y$ denotes the basis projection of $(x_i^*)_i$ onto the first $n$ coordinates. Also apply Lemma \ref{good bounded ap} to find a sequence of finite rank operators $P_n:X\rightarrow X$ such that $P_nx\rightarrow x$ for all $x\in X$ and $P_n^*[X^*]\subset Y$ for all $n\inn$. Assumptions (a), (b) and (c) of Lemma \ref{Johnson Rosenthal Zippin} are evidently satisfied.

Choose $\e>0$ with $\e/(1-\e) < 1/(2c + 2K + 4cK)$ where $c = \sup_n\|P_n\|$ and $K = \sup_n\|T_n\|$. Recursively, setting $E_n = P_n[X] + Q_{n-1}[X]$, where $Q_0$ is the zero operator on $X$, and $F_n = \{0\}$, using Lemma \ref{Johnson Rosenthal Zippin}, choose $(Q_n)_n$ a sequence of bounded linear projections on $X$ such that $P_n[X]\subset Q_n[X] \subset Q_{n+1}[X]$,  $\|Q_n\| \leqslant 2c + 2K + 4cK$ for all $n\inn$ and there exist natural numbers $(k_n)_n$, such that $Q_n^*[X^*]$ is $\e$-close to $T_{k_n}[X^*] = \langle\{x_i^*:\;i=1,\ldots,k_n\}\rangle$ for all $n\inn$. Note that $\dim Q_n[X]\rightarrow \infty$ and hence, by passing to a subsequence we may assume that $(k_n)_n$ is strictly increasing. We conclude that the sequences $(x_i^*)_{i=1}^\infty$ and $(Q_n)_n$ witness the fact that the space $X$ satisfies the assumptions of Lemma \ref{almost there} and therefore it is isomorphic to a Bourgain-Delbaen space.
\end{proof}

\section{Separable infinite dimensional $\mathcal{L}_\infty$-spaces contain $\mathcal{L}_\infty$-subspaces of infinite co-dimension}
Using the main result of Section \ref{script L infinity are BD} we prove that every infinite dimensional $\mathcal{L}_\infty$-space $X$ contains an infinite dimensional $\mathcal{L}_\infty$-subspace $Z$ so that the quotient $X/Z$ is isomorphic to $c_0$.

\begin{lem}\label{conditions to contain co-infinite dimensional script L infinity}
Let $\mathfrak{X}_{(\Ga_q,i_q)_q}$ be a Bourgain-Delbaen space and assume that there exists a decreasing sequence of positive real numbers $(\e_q)_{q=1}^\infty$, with $2C^2\sum_q\e_q < 1$ where $C = \sup_q\|i_q\|$, so that for every $q\inn$ there exist $\ga_1^q, \ga_2^q\in\De_q$ with $\ga_1^q\neq\ga_2^q$ satisfying $\|e_{\ga_1^q}^*\circ i_p\circ r_p - e_{\ga_2^q}^*\circ i_p\circ r_p\| < \e_q$ for $p=0,1,\ldots,q-1$. Then $\mathfrak{X}_{(\Ga_q, i_q)_q}$ contains an infinite dimensional $\mathcal{L}_\infty$-subspace $Z$.
\end{lem}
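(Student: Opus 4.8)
The plan is to realise $Z$ as the common kernel of the family of functionals measuring the discrepancy of each close pair, and then to recognise this kernel, via a small perturbation governed by the summability hypothesis, as a space with the $\mathcal{L}_\infty$ structure. For each $q\inn$ put $g_q = e_{\ga_1^q}^* - e_{\ga_2^q}^*$. Using Remark \ref{bd projections} the assumption reads $\|g_q\circ P_{[0,p]}\| = \|e_{\ga_1^q}^*\circ i_p\circ r_p - e_{\ga_2^q}^*\circ i_p\circ r_p\| < \e_q$ for all $p<q$, and taking $p=q-1$ gives $\|c_{\ga_1^q}^* - c_{\ga_2^q}^*\| < \e_q$, so that $g_q = (d_{\ga_1^q}^* - d_{\ga_2^q}^*) + (c_{\ga_1^q}^* - c_{\ga_2^q}^*)$ with the second summand of norm less than $\e_q$. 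From Remark \ref{coordinate composed with projection} I would record the action of $g_q$ on the FDD: $g_q(d_\ga)=0$ for $\ga\in\De_p$ with $p>q$; $g_q(d_{\ga_1^q})=1$, $g_q(d_{\ga_2^q})=-1$ and $g_q(d_\ga)=0$ for the remaining $\ga\in\De_q$; and $g_q(d_\ga)=(c_{\ga_1^q}^*-c_{\ga_2^q}^*)(d_\ga)$ for $\ga\in\De_p$ with $p<q$. The crucial qualitative feature here is the triangular pattern $g_{q'}(d_{\ga_1^q})=0$ whenever $q'<q$: corrections placed at levels beyond $q$ never disturb constraints at levels up to $q$.

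Next I would set $Z = \bigcap_{q} \ker g_q$. For $z\in Z$ one has $g_q(z)=z(\ga_1^q)-z(\ga_2^q)=0$, so the two coordinates of each pair agree \emph{exactly}; consequently, writing $\tilde\Ga = \Ga\setminus\{\ga_2^q:q\inn\}$ and $\tilde\Ga_n = \Ga_n\cap\tilde\Ga$, the restriction map $J\colon Z\to\ell_\infty(\tilde\Ga)$, $Jz = z|_{\tilde\Ga}$, is an isometry onto its image. To exhibit the $\mathcal{L}_\infty$ structure I would build, inside $Z$, a Schauder basis together with its uniformly $\ell_\infty$ initial–segment subspaces. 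For each unpaired $\ga\in\De_q$ keep the vector $d_\ga$, and for each pair use the merged vector $d_{\ga_1^q}+d_{\ga_2^q}$; call these vectors $f_j$ (one for each element of $\tilde\Ga$, listed by level). Each $f_j$ of level $q$ satisfies $g_{q'}(f_j)=0$ for $q'\le q$ and $|g_{q'}(f_j)|<\e_{q'}\|f_j\|$ for $q'>q$. I would then correct $f_j$ into $Z$ by adding higher-level terms $\sum_{q'>q} t_{q'} b_{q'}$, where $b_{q'}=\tfrac12(d_{\ga_1^{q'}}-d_{\ga_2^{q'}})$ satisfies $g_{q'}(b_{q'})=1$, $g_{q''}(b_{q'})=0$ for $q''<q'$ and $|g_{q''}(b_{q'})|<\e_{q''}\,C$ for $q''>q'$. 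By the triangular pattern the resulting system for the $t_{q'}$ is solvable by forward substitution, the correction is implemented by a single bounded operator $R$ on $\mathfrak{X}_{(\Ga_q,i_q)_q}$, and a direct coordinatewise estimate bounds $\|R\|$ by a multiple of $2C^2\sum_q\e_q$; the hypothesis $2C^2\sum_q\e_q<1$ makes $I+R$ an isomorphism of norm bounded independently of everything. Setting $h_j=(I+R)f_j\in Z$, the initial segments $\hat Z_n = \langle h_j: \operatorname{level}(f_j)\le n\rangle$ are automatically nested, and equal $(I+R)[N_n]$ with $N_n=\langle f_j:\operatorname{level}\le n\rangle$.

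It then remains to check that $N_n$ is uniformly isomorphic to $\ell_\infty^{|\tilde\Ga_n|}$ and that $\bigcup_n\hat Z_n$ is dense in $Z$; granting these, Remark \ref{separable script l p} (together with $\dim\hat Z_n=|\tilde\Ga_n|\to\infty$) yields that $Z$ is an infinite dimensional $\mathcal{L}_\infty$-space. For the first point I would compare $N_n$ with $W_n = i_n[V_n]$, where $V_n=\{y\in\ell_\infty(\Ga_n): y(\ga_1^q)=y(\ga_2^q)\text{ for all }q\le n\}$; since $V_n$ is isometric to $\ell_\infty^{|\tilde\Ga_n|}$ and $i_n$ is a $C$-isomorphism (Remark \ref{they are isomorphisms}), $W_n$ is $C$-isomorphic to $\ell_\infty^{|\tilde\Ga_n|}$, and $N_n$ is a small (summable) perturbation of $W_n$. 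For density I would use that for $z\in Z$ one has $P_{[0,n]}z=i_n(r_n z)\in W_n$ (as $r_n z\in V_n$ by exact pairing), that $W_n$ is close to $N_n$, and that the tail bound $\|R|_{P_{(n,\infty)}}\|\to 0$ lets $(I+R)$ transport approximants in $N_n$ to $\hat Z_n$ with vanishing error.

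The main obstacle is precisely this perturbation bookkeeping: one must enforce the infinitely many pairing constraints simultaneously while keeping the corrected vectors exactly inside $\mathfrak{X}_{(\Ga_q,i_q)_q}$ and inside $Z$, controlling both $\|R\|<1$ and the uniform $\ell_\infty$-constant of the $N_n$. This is where the summability condition $2C^2\sum_q\e_q<1$ is indispensable, as it both sums the cascade of corrections (each new constraint at level $q'$ contributes an error of order $\e_{q'}$, to be repaired at still later levels) and guarantees that the perturbed subspaces remain uniformly isomorphic to $\ell_\infty^{k}$.
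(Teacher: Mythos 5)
Your proposal contains the paper's proof as its core, wrapped in an extra layer that the lemma does not need and that is where the actual problems lie. Your vectors $f_j$, your $N_n$, and your $V_n$, $W_n=i_n[V_n]$ are exactly the paper's objects (its $Z_n=N_0+\cdots+N_n$, its $W_n\subset\ell_\infty(\Ga_n)$ and $\widetilde{W}_n=i_n[W_n]$), and the paper's entire proof consists of establishing, by an explicit construction, the one sentence you state without argument: that $N_n$ is a uniformly small perturbation of $W_n$. Concretely, the paper defines maps $T_{n,p}$ on the level-$p$ piece which copy the coordinate at $\ga_1^s$ into the slot $\ga_2^s$ for $s\in(p,n]$, uses the hypothesis to get $\|x-i_n T_{n,p}(x)\|\leqslant C\e_{p+1}\|x\|$, and sums against the FDD projections $Q_{\{p\}}$ to obtain $T_n$ with $\|T_nx-x\|\leqslant 2C^2\big(\sum_q\e_q\big)\|x\|<\|x\|$ on $N_n$; this is where the hypothesis and the summability are consumed, and it is precisely the step your last paragraph defers as ``perturbation bookkeeping.'' Once that estimate is in hand, the paper simply takes $Z=\overline{\cup_nN_n}$ and is done, since the lemma only asks for \emph{some} infinite dimensional $\mathcal{L}_\infty$-subspace; the functionals $g_q$, the kernel $\cap_q\ker g_q$, the vectors $b_q$ and the operator $R$ are all dispensable.

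In the layer you add there are two flaws, one minor and one real. Minor: $R$ is not a bounded operator on all of $\mathfrak{X}_{(\Ga_q,i_q)_q}$ as claimed. The forward substitution produces summable coefficients only where $|g_q(x)|<\e_q\|x\|$, and this decay holds on $\overline{\langle\{f_j\}\rangle}$ (there $g_q$ annihilates every level $\geqslant q$, so $g_q=g_q\circ P_{[0,q-1]}=g_q\circ i_{q-1}\circ r_{q-1}$ and the hypothesis applies), but not globally: $g_q(d_{\ga_1^q})=1$ while $\|d_{\ga_1^q}\|\leqslant C$ for every $q$. This is harmless since you only apply $R$ on that span. Real: the density of $\cup_n\hat{Z}_n$ in $\cap_q\ker g_q$, which your formulation genuinely needs (your conclusion is about the kernel), does not follow from your sketch. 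Replacing $P_{[0,n]}z\in W_n$ by a nearby element of $N_n$ costs an error proportional to $\|P_{[0,n]}z\|$, hence to $\|z\|$: the two finite dimensional spaces sit at a fixed relative distance, and this term does \emph{not} vanish as $n\to\infty$; your tail bound on $R$ only removes the last error term. What the sketch proves is $\mathrm{dist}\big(z,\overline{\cup_n\hat{Z}_n}\big)\leqslant\theta\|z\|$ for a constant $\theta$ of the order of $C\e/(1-\e)$ with $\e=2C^2\sum_q\e_q$; that yields equality only if $\theta<1$ (via the standard quotient argument), which the hypothesis does not guarantee. The identification is nevertheless true and needs no limiting argument — use your own first observation $\|c_{\ga_1^q}^*-c_{\ga_2^q}^*\|<\e_q$: for $z\in\cap_q\ker g_q$ set $s_q=(d_{\ga_1^q}^*-d_{\ga_2^q}^*)(z)=-(c_{\ga_1^q}^*-c_{\ga_2^q}^*)(z)$, so $|s_q|<\e_q\|z\|$ and $x=z-\sum_qs_qb_q$ converges absolutely; biorthogonality gives $(d_{\ga_1^p}^*-d_{\ga_2^p}^*)(b_q)=\delta_{pq}$, hence $(d_{\ga_1^p}^*-d_{\ga_2^p}^*)(x)=0$ for all $p$, i.e.\ every FDD component of $x$ lies in the corresponding level of the $N_n$'s, so $x\in\overline{\cup_nN_n}$, and $z=(I+R)x$ by uniqueness of the correction coefficients. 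So $I+R$ maps $\overline{\cup_nN_n}$ \emph{onto} the kernel. As submitted, however, your proof both leaves the crucial perturbation estimate unproved and argues the density step incorrectly.
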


\begin{proof}
Define $R_q = \{\ga_1^q, \ga_2^q\}$, $S_q = \cup_{p=1}^q R_p$ for $q\inn$ and $S = \cup_{q=1}^\infty S_q$. Define $N_0 = M_0 = \langle\{d_{\ga}:\;\ga\in\De_0\}\rangle$ and
$$N_q = \langle\{d_\ga:\;\ga\in\De_q\setminus R_q\}\cup\{d_{\ga_1^q} + d_{\ga_2^q}\}\rangle$$
for $q\inn$. Note that $N_q$ is a subspace of $M_q$ of co-dimension one for every $q\inn$. Define
\begin{equation*}
Z_q = \langle\{d_\ga:\;\ga\in\Ga_q\setminus S_q\}\cup\{d_{\ga_1^p} + d_{\ga_2^p}:\;p=1,\ldots,q\}\rangle
\end{equation*}
for all $q\inn$. Observe that $Z_q = N_0 + \cdots + N_q$ for all $q\inn$ and that $(Z_q)_q$ is an increasing sequence of finite dimensional spaces whose union is dense in the space
\begin{equation*}
Z = \overline{\langle\{d_\ga:\;\ga\in\Ga\setminus S\}\cup\{d_{\ga_1^q} + d_{\ga_2^q}:\;q\inn\}\rangle}.
\end{equation*}
We shall prove that $Z$ is the desired subspace. To begin, $Z$ is clearly infinite dimensional. More precisely, the spaces $(N_q)_q$ form an FDD for the space $Z$ with a projection constant $C$.

To conclude that $Z$ is indeed a $\mathcal{L}_\infty$-space, it suffices to show that for each $q\inn$ the space $Z_q$ is $\frac{1+\e}{1-\e}C$-isomorphic to $\ell_\infty^{n_q}$ where $\e = 2C^2\sum_{q=1}^\infty\e_q$  and $n_q = \dim Z_q = \#\Ga_q - q$. For $q\inn$ we define a subspace of $\ell_\infty(\Ga_q)$ as follows:
\begin{equation*}
W_q = \langle\{e_\ga:\;\ga\in\Ga_q\setminus S_q\}\cup\{e_{\ga_1^p} + e_{\ga_2^p}:\;p=1,\ldots,q\}\rangle.
\end{equation*}
Also define $\widetilde{W}_q = i_q[W_q]$. Observe that $W_q$ is isometric to $\ell_\infty^{n_q}$ and therefore, by Remark \ref{they are isomorphisms}, $\widetilde{W}_q$ is $C$-isomorphic to $\ell_\infty^{n_q}$. Hence, if we find a linear map $T_q: Z_q\rightarrow \widetilde{W}_q$ with $\|T_qx - x\| \leqslant \e\|x\|$ for all $x\in Z_q$ the proof will be complete.

Let us fix $q\inn$ and observe that if $x\in N_p$, for some $0\leqslant p\leqslant q$, then $x = i_p(y)$ where $y \in\langle\{e_\ga:\;\ga\in\De_q\setminus S_q\}\cup\{e_{\ga_1^q} + e_{\ga_2^q}\}\rangle$. For $p=0,\ldots,q$ we define $T_{q,p}:N_p\rightarrow W_q$ as follows:
\begin{equation*}
T_{q,p}(x)(\ga) = \left\{
\begin{array}{ll}
y(\ga)& \text{if } \ra(\ga)\leqslant p,\\
i_p(y)(\ga)& \text{if } \ra(\ga)>p\;\text{and}\;\ga\notin S_q,\\
i_p(y)(\ga_1^s)& \text{if } \ra(\ga)>p\;\text{and}\;\ga = \ga_1^s\;\text{for some}\;s\in(p,q],\\
i_p(y)(\ga_1^s)& \text{if } \ra(\ga)>p\;\text{and}\;\ga = \ga_2^s\;\text{for some}\;s\in(p,q].
\end{array}
\right.
\end{equation*}
Observe that the map is linear and well defined. Also observe that if $x(\ga)\neq T_{q,p}(x)(\ga)$, for some $\ga\in\Ga_q$, then necessarily there is an $s\in(p,q]$ so that $\ga = \ga_2^s$ and $T_{q,p}(x)(\ga) = i_p(y)(\ga_1^s) = e_{\ga_1^s}^*\circ i_p\circ r_p(x)$. Hence, for such a $\ga\in\Ga_q$:
\begin{equation*}
|x(\ga) - T_{q,p}(x)(\ga)| = |e_{\ga_2^s}^*\circ i_p\circ r_p(x) - e_{\ga_1^s}^*\circ i_p\circ r_p(x)| < \e_s\|x\| \leqslant \e_{p+1}\|x\|.
\end{equation*}
We conclude that $\|r_q(x) - T_{q,p}(x)\| \leqslant \e_{p+1}\|x\|$ (actually observe that if $p=q$ then $r_q(x) = T_{q,q}(x)$) and hence if we define $\widetilde{T}_{q,p}: N_p\rightarrow \widetilde{W}_q$ with $\widetilde{T}_{q,p} = i_q\circ T_{q,p}$ then
\begin{equation*}
\left\|x - \widetilde{T}_{q,p}(x)\right\| \leqslant C\e_{p+1}\|x\|
\end{equation*}
for all $x\in N_p$ and $p = 0,\ldots,q$. As we previously mentioned, $(N_p)_p$ is an FDD for $Z$ with a projection constant $C$, so we may consider the associated projections $Q_{\{p\}}:Z\rightarrow N_p$ for all $p$. Define $T_q: Z_q\rightarrow \widetilde{W}_q$ with $T_q = \sum_{p=0}^q\widetilde{T}_{q,p}\circ Q_{\{p\}}$. Some simple calculations using $\|Q_{\{p\}}\| \leqslant 2C$ for all $p$ yield that $T_q$ is the desired operator.
\end{proof}

We recall that it is known that separable $\mathcal{L}_\infty$-spaces have $c_0$ as a quotient \cite{P}, \cite{AB}.

\begin{lem}\label{quotient is c0}
Let $\mathfrak{X}_{(\Ga_q,i_q)_q}$ be a Bourgain-Delbaen space satisfying the assumptions of Lemma \ref{conditions to contain co-infinite dimensional script L infinity}. If $Z$ is the subspace constructed in that lemma then the quotient $\mathfrak{X}_{(\Ga_q,i_q)_q}/Z$ is isomorphic to $c_0(\mathbb{N})$.
\end{lem}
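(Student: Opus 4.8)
The plan is to construct an explicit bounded surjection onto $c_0$ whose kernel is $Z$ and which is bounded below after passing to the quotient. For $q\inn$ put $g_q = d_{\ga_1^q}^* - d_{\ga_2^q}^*$ and define $\Psi:\mathfrak{X}_{(\Ga_q,i_q)_q}\to\R^{\N}$ by $\Psi(x)=(g_q(x))_q$. Since $d_\ga^* = e_\ga^* - c_\ga^*$ and the hypothesis applied with $p=q-1$ gives $\|c_{\ga_1^q}^*-c_{\ga_2^q}^*\| < \e_q$, we have $\|g_q\| \leqslant \|e_{\ga_1^q}^*-e_{\ga_2^q}^*\| + \e_q \leqslant 2+\e_1$, so $\Psi$ is bounded into $\ell_\infty(\N)$. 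By Remark \ref{coordinate composed with projection}, $g_q = (e_{\ga_1^q}^*-e_{\ga_2^q}^*)\circ P_{\{q\}}$, hence $g_q$ annihilates every vector $x$ with $P_{\{q\}}x=0$; thus $\Psi$ sends finitely supported vectors to finitely supported scalar sequences, and as these are dense and $c_0$ is closed in $\ell_\infty(\N)$, $\Psi$ actually maps the whole space into $c_0$.

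I next claim $\ker\Psi = Z$. The inclusion $Z\subseteq\ker\Psi$ is immediate from the biorthogonality of $(d_\ga^*)_\ga$ and $(d_\ga)_\ga$ (Proposition \ref{facts about the dual}(i)): each generator $d_\ga$ with $\ga\in\Ga\setminus S$, as well as each $d_{\ga_1^p}+d_{\ga_2^p}$, is killed by every $g_q$. Conversely, if $\Psi(x)=0$ then $d_{\ga_1^q}^*(x)=d_{\ga_2^q}^*(x)$ for every $q$; expanding $x=\sum_q P_{\{q\}}x$ and regrouping the coordinates indexed by $\Ga\setminus S$ together with the pairs $R_q$ displays the partial sums as elements of $Z_q$, so $x\in Z$.

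Now let $\pi$ denote the quotient map and set $u_q=\pi(d_{\ga_1^q})$, so that $\pi(d_{\ga_2^q})=-u_q$ and, viewing $g_q$ as an element of $Z^\perp=(\mathfrak{X}_{(\Ga_q,i_q)_q}/Z)^*$, we have $g_q(u_p)=\de_{qp}$. The bound $\|g_q\|\leqslant 2+\e_1$ yields at once the lower estimate $\max_q|\la_q|\leqslant(2+\e_1)\|\sum_q\la_q u_q\|$ for all finitely supported scalars $(\la_q)$. The substance of the proof, and the step I expect to be the main obstacle, is the matching upper estimate $\|\sum_{q\in F}\la_q u_q\|\leqslant M\max_q|\la_q|$ with $M$ independent of $F$: it amounts to producing, for each prescribed finite datum, a preimage of controlled norm. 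Fixing $F$ with $Q=\max F$, I look for $v=i_Q\!\big(\sum_{p=1}^{Q}c_p e_{\ga_1^p}\big)\in Y_Q$ with $g_q(v)=\la_q$ for all $q$ (where $\la_q=0$ for $q\notin F$). Writing $g_q(v)=\big(v(\ga_1^q)-v(\ga_2^q)\big)-(c_{\ga_1^q}^*-c_{\ga_2^q}^*)(v)$ and using the extension property to compute $v(\ga_1^q)-v(\ga_2^q)=c_q$, the system becomes $(I-E)c=\la$, where $E(c)_q=(c_{\ga_1^q}^*-c_{\ga_2^q}^*)\big(i_Q(\sum_p c_p e_{\ga_1^p})\big)$. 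Since $\|c_{\ga_1^q}^*-c_{\ga_2^q}^*\|<\e_q$, the operator $E$ satisfies $\|E\|_{\ell_\infty^Q\to\ell_\infty^Q}\leqslant\e_1 C<1$ (the hypothesis $2C^2\sum_q\e_q<1$ forces $\e_1 C<\tfrac12$), so a Neumann series solves the system with $\|c\|_\infty\leqslant(1-\e_1 C)^{-1}\max_q|\la_q|$. As $\|v\|\leqslant C\|c\|_\infty$ and, by $\ker\Psi=Z$, $\pi(v)=\sum_{q\in F}\la_q u_q$, we obtain $\|\sum_{q\in F}\la_q u_q\|\leqslant 2C\max_q|\la_q|$.

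Combining the two estimates shows that $(u_q)_q$ is equivalent to the unit vector basis of $c_0$. Since $\pi(d_\ga)$ equals $0$ for $\ga\in\Ga\setminus S$ and $\pm u_q$ otherwise, the vectors $(u_q)_q$ have dense linear span in the quotient, and therefore $\mathfrak{X}_{(\Ga_q,i_q)_q}/Z=\overline{\langle u_q:\,q\inn\rangle}$ is isomorphic to $c_0(\N)$, as required. Apart from the construction of the preimage $v$, which is where the smallness hypothesis on the $\e_q$ and the extension operators $i_q$ are genuinely used, the remaining steps are routine verifications with the biorthogonal system and the Bourgain--Delbaen projections.
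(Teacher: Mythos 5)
Your argument is correct, and its decisive step is genuinely different from the paper's. Both proofs work with the same objects --- the quotient images of the vectors $d_{\ga_1^q}$ (the paper uses $y_q$, the image of $d_{\ga_1^q}-d_{\ga_2^q}$, which equals $2u_q$) and the functionals $d^*_{\ga_1^q}-d^*_{\ga_2^q}\in Z^{\perp}$, and your lower $c_0$-estimate is essentially the paper's computation --- but the crucial upper estimate is obtained by different means. The paper argues on the dual side: it asserts that $(y_q)_q$ is a Schauder basis of the quotient with projection constant $C$, identifies its biorthogonals with $w_q^*=\frac{1}{2}(d^*_{\ga_1^q}-d^*_{\ga_2^q})$, notes that these are $\e_q$-perturbations of multiples of $e^*_{\ga_1^q}-e^*_{\ga_2^q}$ and hence, by Proposition \ref{dual contains l1}, equivalent to the unit vector basis of $\ell_1$, and then dualizes to conclude that $(y_q)_q$ is equivalent to the $c_0$ basis. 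You instead stay on the primal side and manufacture, for each finite set of coefficients, a preimage $v=i_Q\left(\sum_p c_p e_{\ga_1^p}\right)$ of controlled norm by inverting $I-E$ with a Neumann series. This buys you an explicit equivalence constant $2C(2+\e_1)$, independence from Proposition \ref{dual contains l1} and from the (unproved in the paper) claim that $(y_q)_q$ is a basis of the quotient --- density of $\langle u_q:q\inn\rangle$ plus the two-sided estimate suffice --- and the observation that this lemma only needs $\e_1 C<\frac{1}{2}$ rather than the full strength of $2C^2\sum_q\e_q<1$; your identification $Z=\ker\Psi$ is likewise an ingredient the paper never needs (it only uses $w_q^*\in Z^\perp$), but you do need it to get $\pi(v)=\sum_{q\in F}\la_q u_q$. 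Two steps you leave implicit and should record: first, $g_q(v)=0$ for $q>Q$, which holds because $v\in Y_Q=M_0+\cdots+M_Q$, so $P_{\{q\}}v=0$ for such $q$ while $g_q=(e^*_{\ga_1^q}-e^*_{\ga_2^q})\circ P_{\{q\}}$; second, in the inclusion $\ker\Psi\subset Z$, the precise point is that $P_{\{q\}}x=\sum_{\ga\in\De_q}d^*_\ga(x)d_\ga$ lies in $N_q$ exactly when $d^*_{\ga_1^q}(x)=d^*_{\ga_2^q}(x)$, whence $P_{[0,q]}x\in Z_q$ and $P_{[0,q]}x\to x$. Neither is a gap.
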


\begin{proof}
Following the notation of the proof of Lemma \ref{conditions to contain co-infinite dimensional script L infinity}, let $Z$ be the constructed subspace and set also $W$ be the closed linear span of the vectors $d_{\ga^q_1} - d_{\ga^q_2}$, $q\inn$. Let $Q:\mathfrak{X}_{(\Ga_q,i_q)_q}\rightarrow \mathfrak{X}_{(\Ga_q,i_q)_q}/Z$ denote the corresponding quotient map and for $q\inn$ define $y_q = Q(d_{\ga^q_1} - d_{\ga^q_2})$. It is not very difficult to prove that $(y_q)_q$ is a Schauder basis of $\mathfrak{X}_{(\Ga_q,i_q)_q}/Z$ with projection constant $C = \sup_q\|i_q\|$. Let $w_q^* = 1/2(d_{\ga_1^q}^* - d_{\ga_2^q}^*)$ for all $q\inn$ and $(y_q^*)_q\subset (\mathfrak{X}_{(\Ga_q,i_q)_q}/Z)^*$ be the biorthogonal sequence of $(y_q)_q$. It follows that the sequences $(w_q^*)_q$ and $(y_q^*)_q$ are isometrically equivalent and that $\|w_q^* - 2(e_{\ga^q_1}^* - e_{\ga^q_2}^*)\| \leqslant 2\e_q$ for all $q$. Proposition \ref{dual contains l1} yields that $(y_q^*)_q$ is equivalent to the unit vector basis of $\ell_1$, which indeed implies that $(y_q)_q$ is equivalent to the unit vector basis of $c_0(\mathbb{N})$.
\end{proof}

\begin{prp}\label{all BD's contain co-infinite dimensional script L infinity}
Every separable infinite dimensional $\mathcal{L}_\infty$-space $X$ contains an infinite dimensional $\mathcal{L}_\infty$-subspace $Z$ so that the quotient $X/Z$ is isomorphic to $c_0(\mathbb{N})$. In other words, every separable infinite dimensional $\mathcal{L}_\infty$-space $X$ is the twisted sum of a $\mathcal{L}_\infty$-space $Z$ and $c_0(\mathbb{N})$.
\end{prp}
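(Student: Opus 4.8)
The plan is to reduce the statement to the two preceding lemmas. By Theorem \ref{separable script l infinities are bd spaces} the space $X$ is isomorphic to a Bourgain-Delbaen space, and since the relevant properties (being a $\mathcal{L}_\infty$-space, and admitting a subspace with $c_0$-quotient) are preserved under isomorphism, it suffices to exhibit a Bourgain-Delbaen representation $\mathfrak{X}_{(\Ga_q,i_q)_q}$ of $X$ satisfying the hypothesis of Lemma \ref{conditions to contain co-infinite dimensional script L infinity}. Once such a representation is at hand, Lemma \ref{conditions to contain co-infinite dimensional script L infinity} produces the infinite dimensional $\mathcal{L}_\infty$-subspace $Z$ and Lemma \ref{quotient is c0} identifies the quotient with $c_0(\N)$; transporting $Z$ through the isomorphism then completes the proof.

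The heart of the argument is a counting (covering) observation which verifies the hypothesis of Lemma \ref{conditions to contain co-infinite dimensional script L infinity} whenever the sets $\De_q$ grow quickly. Fix the decreasing sequence $(\e_q)_q$ with $2C^2\sum_q\e_q<1$. For $\ga\in\De_q$ consider the functional $c_\ga^*=e_\ga^*\circ i_{q-1}\circ r_{q-1}$. By Lemma \ref{extension functionals are in the span of evaluation functionals} every such $c_\ga^*$ lies in the fixed subspace $\spn\{e_\eta^*:\eta\in\Ga_{q-1}\}$, whose dimension is $\#\Ga_{q-1}$, and $\|c_\ga^*\|\leqslant C$. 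Moreover, recording $i_p\circ r_p=P_{[0,p]}$ and $e_\ga^*\circ i_p\circ r_p=c_\ga^*\circ P_{[0,p]}$ for $p<q$ together with $\|P_{[0,p]}\|\leqslant 2C$ (Remark \ref{bd projections}), the full hypothesis of Lemma \ref{conditions to contain co-infinite dimensional script L infinity} at level $q$ reduces to the single estimate $\|c_{\ga_1^q}^*-c_{\ga_2^q}^*\|<\e_q/(2C)$ for some two distinct $\ga_1^q,\ga_2^q\in\De_q$. Since $\{c_\ga^*:\ga\in\De_q\}$ is a family of $\#\De_q$ vectors contained in a ball of radius $C$ in a space of dimension $\#\Ga_{q-1}$, a standard volumetric covering estimate shows that as soon as $\#\De_q$ exceeds the number of balls of radius $\e_q/(4C)$ needed to cover that ball, two of these functionals lie within $\e_q/(2C)$ of one another, which yields the required pair of distinct indices.

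It remains to produce a Bourgain-Delbaen representation of $X$ with such rapidly growing $\De_q$, and this is arranged by revisiting the proof of Theorem \ref{separable script l infinities are bd spaces}. There the sets are $\Ga_q=\{1,\dots,k_q\}$ with $k_q=\dim Q_q[X]$, so that once $Q_{q-1}$ is fixed the cardinality $\#\Ga_{q-1}=k_{q-1}$, and hence the covering number above, is determined. Since $\dim Q_n[X]\to\infty$, one may pass to a sufficiently sparse subsequence of $(Q_n)_n$ before relabelling, so that at each step the increment $\#\De_q=k_q-k_{q-1}$ exceeds the covering number determined by $C$, $\e_q$ and $k_{q-1}$. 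A subsequence inherits all the hypotheses of Lemma \ref{almost there}, so the space it produces is an isomorphic Bourgain-Delbaen copy of $X$ to which the counting observation applies.

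I expect the main obstacle to lie in the bookkeeping of the previous paragraph: one must ensure that the enlargement of $\#\De_q$ can be carried out \emph{after} $\Ga_{q-1}$, and thereby the covering number, has been frozen, so that the two demands do not compete; the freedom to do so comes precisely from $\dim Q_n[X]\to\infty$. By contrast, the covering estimate itself is routine, and the reduction of the $p$-by-$p$ condition of Lemma \ref{conditions to contain co-infinite dimensional script L infinity} to the single norm estimate is immediate from the identity $e_\ga^*\circ i_p\circ r_p=c_\ga^*\circ P_{[0,p]}$ and the uniform bound $\|P_{[0,p]}\|\leqslant 2C$.
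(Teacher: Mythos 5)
Your proposal is correct, and its key mechanism is genuinely different from the paper's. Both proofs share the same skeleton (Theorem \ref{separable script l infinities are bd spaces} followed by Lemmas \ref{conditions to contain co-infinite dimensional script L infinity} and \ref{quotient is c0}), and your two ingredients are sound: the identity $e_\ga^*\circ i_p\circ r_p = c_\ga^*\circ P_{[0,p]}$ for $\ga\in\De_q$ and $p<q$ follows from compatibility (equivalently from $P_{[0,q-1]}\circ P_{[0,p]}=P_{[0,p]}$), so the level-by-level hypothesis of Lemma \ref{conditions to contain co-infinite dimensional script L infinity} does collapse to a single norm estimate; and the pigeonhole via covering numbers for the family $\{c_\ga^*:\ga\in\De_q\}$ inside the ball of radius $C$ of the $(\#\Ga_{q-1})$-dimensional space $\spn\{e_\eta^*:\eta\in\Ga_{q-1}\}$ (Lemma \ref{extension functionals are in the span of evaluation functionals}) is valid. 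The paper, by contrast, never re-enters the proof of Theorem \ref{separable script l infinities are bd spaces}: it observes that for any strictly increasing $(q_s)_s$ the spaces $\mathfrak{X}_{(\Ga_q,i_q)_q}$ and $\mathfrak{X}_{(\Ga_{q_s},i_{q_s})_s}$ are \emph{equal} (by Proposition \ref{it is scrip L infinity}, since $\cup_q Y_q$ is dense), so the filtration of an arbitrary Bourgain-Delbaen representation may be freely re-blocked; a compactness argument in the finite-dimensional spaces of functionals then produces, beyond any level $q$, two distinct $\ga_1,\ga_2\in\Ga\setminus\Ga_q$ with $\|e_{\ga_1}^*\circ i_p\circ r_p - e_{\ga_2}^*\circ i_p\circ r_p\|<\e$ for all $p\leq q$, and the recursive blocking places each such pair inside a single $\De$-set of the new filtration. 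What the paper's route buys is that it is qualitative and representation-free: no covering numbers, no control of the increments $\#\De_q$, and no need to check that sparsified sequences $(Q_{n_j})_j$ still satisfy Lemma \ref{almost there} and still represent $X$ (both of which your argument does require, and both of which do hold). What your route buys is an explicit quantitative criterion: any representation with sufficiently fast-growing $\De_q$'s works. The one point you must make explicit is the apparent circularity you gesture at but do not quite name: the covering number at step $q$ involves the constant $C=\sup_q\|i_q\|$ of the representation you are still in the middle of building. This is harmless only because the proof of Lemma \ref{almost there} yields an a priori bound $C\leq \|U\|/c$ depending solely on $(x_i^*)_i$, $\e$ and $\sup_n\|Q_n\|$, quantities that do not increase under sparsification; that bound, and the sequence $(\e_q)_q$ chosen against it, must be fixed before the recursion starts. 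Finally, note that grafting your covering argument onto the paper's re-blocking observation would give the cleanest version of your proof: block an arbitrary Bourgain-Delbaen representation so that each new $\De$-set exceeds the relevant covering number, with no need to touch the proof of Theorem \ref{separable script l infinities are bd spaces} at all.
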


\begin{proof}
By virtue of Theorem \ref{separable script l infinities are bd spaces}, we may assume that  $X$ is a Bourgain-Delbaen space $\mathfrak{X}_{(\Ga_q,i_q)_q}$. We start by observing that for every strictly increasing sequence $(q_s)_{s=0}^\infty$ of $\mathbb{N}\cup\{0\}$, the Bourgain-Delbaen spaces $\mathfrak{X}_{(\Ga_q,i_q)_q}$ and $\mathfrak{X}_{(\Ga_{q_s},i_{q_s})_s}$ are actually equal. This follows from Proposition \ref{it is scrip L infinity}, in particular the fact that $\cup_qY_q$ is dense in $\mathfrak{X}_{(\Ga_q,i_q)_q}$. It is therefore sufficient to find an appropriate increasing sequence $(q_s)_{s=0}^\infty$ so that the space $\mathfrak{X}_{(\Ga_{q_s},i_{q_s})_s}$ satisfies the assumptions of Lemma \ref{conditions to contain co-infinite dimensional script L infinity} (and hence also those of Lemma \ref{quotient is c0}).

Fix a decreasing sequence of positive real numbers $(\e_q)_{q=1}^\infty$, with $\sum_q\e_q < 1/(2C^2)$ where $C = \sup_q\|i_q\|$. A compactness argument yields that for every $\e>0$ and $q\inn\cup\{0\}$ there exist non-equal $\ga_1,\ga_2\in\Ga\setminus\Ga_q$ so that $\|e_{\ga_1}^*\circ i_p\circ r_p - e_{\ga_2}^*\circ i_p\circ r_p\| < \e$ for $p=0,\ldots,q$. Set $q_0 = 0$ and using the above, recursively choose a strictly increasing sequence $(q_s)_{s=0}^\infty$ so that for every $s\inn$ there are non-equal $\ga_1^s,\ga_2^s\in\Ga_{q_s}\setminus \Ga_{q_{s-1}}$ with $\|e_{\ga_s^1}^*\circ i_{q_t} \circ r_{q_t} - e_{\ga_s^2}^*\circ i_{q_t} \circ r_{q_t}\| \leqslant \e_s$ for $t = 0,\ldots, s-1$.
\end{proof}

\section{A $\mathcal{L}_\infty$ and asymptotic $c_0$ space not containing $c_0$}\label{asymptotic c0}
We employ the Bourgain-Delbaen method to define an isomorphic $\ell_1$-predual $\X$, which is asymptotic $c_0$ and does not contain a copy of $c_0$. We follow notation similar to that used in \cite{BD} and \cite{AH}.

\subsection{Definition of the space $\X$} We fix a natural number $N\geqslant 3$ and a constant $1<\theta < N/2$. Define $\De_0 = \{0\}$ and assume that we have defined the sets $\De_0,\De_1,\ldots,\De_{q}$. We set $\Ga_p = \cup_{i=0}^p\Ga_i$ and for each $\ga\in\Ga_q$ We denote by $\ra(\ga)$ the unique $p$ so that $\ga\in\De_p$. Assume that to each $\ga\in\Ga_q\setminus \Ga_0$ we have assigned a natural number in $\{1,\ldots,n\}$, called the age of $\ga$ and denoted by $\ag(\ga)$.

Assume moreover that we have defined extension functionals $(c_\ga^*)_{\ga\in\De_p}$ and extension operators $i_{p-1,p}:\ell_{\infty}(\Ga_{p-1})\rightarrow \ell_{\infty}(\Ga_{p})$ so that
\begin{equation}\label{extension to next in this particular case}
i_{p-1,p}(x)(\ga) = \left\{
\begin{array}{ll}
x(\gamma)& \text{if } \gamma\in\Gamma_{p-1},\\
c_\gamma^*(x)& \text{if } \gamma\in \Delta_{p}
\end{array}
\right.
\end{equation}
for $p=1,\ldots,q$. If $0\leqslant p <s\leqslant q$ we define $i_{p,s} = i_{s-1,s}\circ\cdots\circ i_{p,p+1}$ and we also denote by $i_{p,p}$ the identity operator on $\ell_{\infty}(\Ga_{p})$.

We define $\De_{q+1}$ to be the set of all tuples of one of the two forms described below:
\begin{subequations}
\begin{equation}\label{simple type tuple}
\left(q+1,n,(\e_i)_{i=1}^k, (E_i)_{i=1}^k, (\eta_i)_{i=1}^k\right)
\end{equation}
where $n\leqslant (\#\Ga_q)^2$, $1\leqslant k \leqslant n$, $\e_i\in \{-1,1\}$ for $i=1,\ldots,k$, $(E_i)_{i=1}^k$ is a sequence of successive intervals of $\{0,\ldots,q\}$ and $\eta_i\in \Ga_q$ with $\ra(\eta_i)\in E_i$ for $i=1,\ldots,k$, or
\begin{equation}\label{slightly less simple type tuple}
\left(q+1,\xi,n,(\e_i)_{i=1}^k, (E_i)_{i=1}^k, (\eta_i)_{i=1}^k\right)
\end{equation}
where $\xi\in\Ga_{q-1}\setminus\Ga_0$ with $\ag(\xi) < N$, $(\#\Ga_{\ra(\xi)})^2 \leqslant n \leqslant (\#\Ga_{q})^2$, $1\leqslant k\leqslant n$, $\e_i\in \{-1,1\}$ for $i=1,\ldots,k$, $(E_i)_{i=1}^k$ is a sequence of successive intervals of $\{\ra(\xi)+1,\ldots,q\}$ and $\eta_i\in \Ga_q$ with $\ra(\eta_i)\in E_i$ for $i=1,\ldots,k$.
\end{subequations}

\begin{subequations}
To each $\ga\in\De_{q+1}$ we assign an extension functional $c_\ga^*:\ell_{\infty}(\De_q)\rightarrow \R$. If $0\leqslant p\leqslant q$ we define the projection $P^q_{[0,p]}x = i_{p,q}\circ r_p(x)$ while if $0 \leqslant p\leqslant s\leqslant  q$ we define the projection $P^q_{(p,s]} = P^q_{[0,s]} - P^q_{[0,p]}$. If $\ga\in\De_{q+1}$ is of the form \eqref{simple type tuple}, we set $\ag(\ga) = 1$ and define the extension functional $c_\ga^*$ as follows:
\begin{equation}\label{simple type extension}
c_\ga^* = \frac{\theta}{N}\frac{1}{n}\sum_{i=1}^k\e_i e_{\eta_i}^*\circ P^q_{E_i}.
\end{equation}
If $\ga\in\De_{q+1}$ is of the form \eqref{slightly less simple type tuple}, we set $\ag(\ga) = \ag(\xi) + 1$ and define the extension functional $c_\ga^*:\ell_{\infty}(\De_q)\rightarrow \R$ as follows:
\begin{equation}\label{slightly less simple type extension}
c_\ga^* = e_\xi^* + \frac{\theta}{N}\frac{1}{n}\sum_{i=1}^k\e_i e_{\eta_i}^*\circ P^q_{E_i}.
\end{equation}
\end{subequations}

The inductive construction is complete. We set $\Ga = \cup_{q=1}^\infty\De_q$ and for each $q\in\N\cup\{0\}$ we define the extension operator $i_q:\ell_\infty(\Ga_q)\rightarrow\ell_\infty(\Ga)$ by the rule
\begin{equation*}
i_q(x)(\ga) = \left\{
\begin{array}{ll}
x(\gamma)& \text{if } \gamma\in\Gamma_q,\\
i_{q,p}(x)(\ga)& \text{if } \gamma\in \Delta_{p}\;\text{for some}\;p > q.
\end{array}
\right.
\end{equation*}
Standard arguments yield that $(i_q)_{q=0}^\infty$ is a compatible sequence of extension operators with $\sup_q\|i_q\| \leqslant N/(N-2\theta)$. We denote by $\X$ the resulting Bourgain-Delbaen space  $\mathfrak{X}_{(\Ga_q,i_q)_q}$. We shall use the notation from section \ref{general definition of a BD space}.

\begin{rmk}\label{bd norms in this particular case}
By Remark \ref{bd projections} we obtain that for every interval $E$ of the natural numbers, $\|P_E\| \leqslant 2N/(N-2\theta)$. It follows that for every $\ga\in\Ga$ and such $E$, $\|e_\ga^*\circ P_E\| \leqslant 2N/(N-2\theta)$, in particular $\|d_\ga^*\| \leqslant 2N/(N - 2\theta)$.
\end{rmk}

\begin{rmk}
We enumerate the set $\Ga = \cup_{q=0}^\infty\De_q$ in such a manner that the sets $\De_q$ correspond to successive intervals of $\N$. If we denote this enumeration by $\Ga = \{\ga_i:\;\inn\}$, according to Remark \ref{it has a basis}, $(d_{\ga_i})_i$ forms a Schauder basis for $\X$. It is with respect to this basis that we show that the space $\X$ is asymptotic $c_0$. However, when we write $P_E$ we shall mean the Bourgain-Delbaen projection onto $E$ as defined in Remark \ref{bd projections}.
\end{rmk}

Arguing as in \cite[Proposition 4.5]{AH}, each $e_\ga^*$ admits an analysis.
\begin{prp}\label{analysis}
Let $\ga\in\Ga$ with $\ra(\ga) > 0$. The functional $e_\ga^*$ admits a unique analysis of the following form:
\begin{equation}\label{analyse this}
e_\ga^* = \sum_{r = 1}^a d_{\xi_r}^* + \frac{\theta}{N}\sum_{r=1}^a \frac{1}{n_r} \sum_{i=1}^{k_r}\e_{r,i}e_{\eta_{r,i}}^*\circ P_{E_{r,i}}
\end{equation}
where $a = \ag(\ga) \leqslant N$, the $\xi_i$'s are in $\Ga\setminus \Ga_0$ with $\xi_a = \ga$, the $E_{r,i}$'s are intervals of $\N\cup\{0\}$ with $E_{1,1} < \cdots < E_{1,k_1} < \ra(\xi_1) <  E_{2,1} < \cdots < E_{a,k_a} < \ra (\xi_a)$, the $\eta_{r,i}$'s are in $\Ga$ with $\ra(\eta_{r,i})\in E_{r,i}$, the $\e_{r,i}$'s are in $\{-1,1\}$, $n_{r} > (\#\Ga_{\ra(\xi_{r-1})})^2$ for $r=2,\ldots,a$ and $1\leqslant k_r \leqslant n_r$ for $r=1,\ldots a$.
\end{prp}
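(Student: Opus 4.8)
The plan is to establish the existence of the analysis \eqref{analyse this} by induction on the age $a=\ag(\ga)$, unravelling the recursive construction of $\Ga$ through the single identity $e_\ga^*=d_\ga^*+c_\ga^*$ of Definition \ref{extension functionals and biorthogonals}. The starting observation is that for $\ga\in\De_{q+1}$ and $y\in\ell_\infty(\Ga_q)$ one has $e_\ga^*(i_q(y))=i_{q,q+1}(y)(\ga)=c_\ga^*(y)$, the local extension functional of \eqref{simple type extension} or \eqref{slightly less simple type extension}; combined with $c_\ga^*=e_\ga^*\circ P_{[0,q]}=e_\ga^*\circ i_q\circ r_q$ from Remark \ref{coordinate composed with projection}, this shows that, as a functional on $\X$, $c_\ga^*$ equals $\frac{\theta}{N}\frac{1}{n}\sum_{i=1}^k\e_i\, e_{\eta_i}^*\circ P^q_{E_i}\circ r_q$ (plus a summand $e_\xi^*$ in the case \eqref{slightly less simple type tuple}). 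The whole argument then amounts to repeatedly replacing $c_\ga^*$ by this formula and, whenever $\ga$ is of the form \eqref{slightly less simple type tuple}, substituting the inductively known analysis of $e_\xi^*$.

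The first and, I expect, most delicate step is to reconcile the local projections $P^q_{E_i}$ appearing in the construction with the genuine Bourgain-Delbaen projections $P_{E_i}$ of Remark \ref{bd projections} occurring in \eqref{analyse this}. Concretely I would prove that for every $\eta\in\Ga_q$ and every interval $E=(t,s]\subset\{0,\ldots,q\}$ with $\ra(\eta)\in E$ one has $e_\eta^*\circ P^q_E\circ r_q=e_\eta^*\circ P_E$ as functionals on $\X$ (the initial-interval case $E=[0,s]$ being even simpler). Writing $P_{(t,s]}=i_s\circ r_s-i_t\circ r_t$ and $P^q_{(t,s]}=i_{s,q}\circ r_s-i_{t,q}\circ r_t$ and using $\ra(\eta)\leqslant s$, the extension property gives $i_s(r_sx)(\eta)=x(\eta)=i_{s,q}(r_sx)(\eta)$; for the second term the consistency of the extension operators, namely that $i_{p,q}$ restricted to $\Ga_p$ is the identity, yields $i_{t,q}(w)(\eta)=i_{t,\ra(\eta)}(w)(\eta)=i_t(w)(\eta)$. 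Subtracting gives the claimed equality, which is exactly what converts the $P^q$'s of the definition into the $P$'s of \eqref{analyse this}.

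Granting this, the induction runs cleanly. For the base case $a=1$ the vertex $\ga$ must be of the simple form \eqref{simple type tuple}, since a vertex of type \eqref{slightly less simple type tuple} carries $\xi\in\Ga_{q-1}\setminus\Ga_0$, whence $\ag(\xi)\geqslant 1$ and $\ag(\ga)\geqslant 2$; then $e_\ga^*=d_\ga^*+\frac{\theta}{N}\frac{1}{n}\sum_{i=1}^k\e_i\, e_{\eta_i}^*\circ P_{E_i}$, which is \eqref{analyse this} with $\xi_1=\ga$. For $a\geqslant 2$, $\ga$ is of the form \eqref{slightly less simple type tuple}, so $e_\ga^*=d_\ga^*+e_\xi^*+\frac{\theta}{N}\frac{1}{n}\sum_{i=1}^k\e_i\, e_{\eta_i}^*\circ P_{E_i}$ with $\ag(\xi)=a-1$ and $\ra(\xi)<\ra(\ga)$; applying the inductive hypothesis to $e_\xi^*$ (legitimate since $\ra(\xi)>0$), relabelling its data as $r=1,\ldots,a-1$ with $\xi_{a-1}=\xi$, and appending the block $r=a$ read off from the tuple of $\ga$ with $\xi_a=\ga$, produces \eqref{analyse this}.

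It remains to verify the side conditions, all read off directly from the two tuple types: the bound $a\leqslant N$ from the constraint $\ag(\xi)<N$; the size estimates on $n_r$ for $r=2,\ldots,a$ from the range imposed on $n$ in \eqref{slightly less simple type tuple}; and the nesting $E_{1,1}<\cdots<\ra(\xi_1)<E_{2,1}<\cdots<\ra(\xi_a)$ by combining the inductive ordering for $\xi$, whose intervals all precede $\ra(\xi)=\ra(\xi_{a-1})$, with the fact that the intervals of $\ga$ are successive subintervals of $\{\ra(\xi)+1,\ldots,q\}$ and hence lie strictly between $\ra(\xi_{a-1})$ and $\ra(\xi_a)=\ra(\ga)$. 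Uniqueness is then obtained along the same induction: since every $E_{r,i}<\ra(\ga)$ and $\ra(\xi_r)<\ra(\ga)$ for $r<a$, composing with $P_{\{\ra(\ga)\}}$ and using the biorthogonality of $(d_\ga^*)_{\ga\in\Ga}$ isolates $d_{\xi_a}^*=e_\ga^*\circ P_{\{\ra(\ga)\}}=d_\ga^*$, so the top term is forced; subtracting it leaves $c_\ga^*$, whose defining formula fixes the block $r=a$ and the predecessor $\xi$ (if any), and one recurses. The main obstacle is the projection-reconciliation of the second paragraph; once $P^q_{E_i}$ and $P_{E_i}$ are identified, everything else is bookkeeping over the recursive definition of $\Ga$.
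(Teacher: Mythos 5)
Your proposal is correct and takes essentially the same route as the paper, whose entire proof is the citation of \cite[Proposition 4.5]{AH}: induction on the age $\ag(\ga)$, unfolding $e_\ga^* = d_\ga^* + c_\ga^*$ via the tuple structure and substituting the inductively obtained analysis of $e_\xi^*$. Your reconciliation of the local projections $P^q_{E_i}$ with the Bourgain-Delbaen projections $P_{E_i}$, and the recursive uniqueness argument via composition with $P_{\{\ra(\ga)\}}$, are exactly the details that this citation leaves implicit.
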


\begin{rmk}
Note that if in \eqref{analyse this} we set $f_r = 1/n_r\sum_{i=1}^{k_r}e_{\eta_{r,i}}^*\circ P_{E_{r,i}}$, then $(f_r)_{r=1}^a$ constitutes a very fast growing sequence of $\al$-averages, in the sense of \cite{ABM}.
\end{rmk}

\subsection{The main property of the space $\X$}

\begin{lem}\label{useful lema}
Let \(x_1 < \cdots < x_m \) be blocks of \((d_{\ga_i})_i\) of norm at most one, $E_1<\cdots<E_k$ be intervals of $\N\cup\{0\}$, $(\eta_i)_{i=1}^k$ be a sequence in $\Ga$ with $\ra(\eta_i)\in E_i$,  for $i=1,\ldots,k$, $(\e_i)_{i=1}^k$ be a sequence in $\{-1,1\}$ and let \(n \geq \max \{m^2, k\}\). Then
\begin{equation}
\left|\left(\frac{1}{n} \sum_{i=1}^k \e_ie_{\eta_i}^*\circ P_{E_i}\right)\left(\sum_{j=1}^m x_j\right) \right| \leqslant \frac{4N}{N-2\theta}.
\end{equation}
\end{lem}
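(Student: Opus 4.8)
The plan is to avoid estimating $\|\sum_{j=1}^m x_j\|$, which need not be bounded, and instead to distribute the functional over both the index $i$ and the blocks $x_j$, reducing everything to one very coarse bound per pair together with a counting argument on the two families of intervals.

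Writing $x=\sum_{j=1}^m x_j$ and applying the triangle inequality first over $i$ and then over $j$, I would bound
\begin{equation*}
\left|\left(\frac1n\sum_{i=1}^k\e_i e_{\eta_i}^*\circ P_{E_i}\right)(x)\right|\leqslant \frac1n\sum_{i=1}^k\sum_{j=1}^m\left|e_{\eta_i}^*(P_{E_i}x_j)\right|.
\end{equation*}
For a single pair $(i,j)$ we have $\|e_{\eta_i}^*\|\leqslant 1$, $\|x_j\|\leqslant 1$ and, by Remark \ref{bd norms in this particular case}, $\|P_{E_i}\|\leqslant 2N/(N-2\theta)$, so that $|e_{\eta_i}^*(P_{E_i}x_j)|\leqslant 2N/(N-2\theta)$. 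Moreover, since $(M_q)_q$ is an FDD and $x_j$ is a block with range $\ran x_j$, the projection $P_{E_i}x_j$ vanishes whenever $E_i\cap\ran x_j=\emptyset$; hence only the pairs $(i,j)$ for which $E_i$ meets $\ran x_j$ contribute to the double sum.

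The heart of the matter, and the step I expect to be the main obstacle, is to bound the number of contributing pairs. Both $(E_i)_{i=1}^k$ and $(\ran x_j)_{j=1}^m$ are families of pairwise disjoint intervals of $\N\cup\{0\}$. For any two intersecting intervals the larger of the two left endpoints lies in the intersection; thus every contributing pair satisfies $\min E_i\in\ran x_j$ or $\min\ran x_j\in E_i$. Since the $\ran x_j$ are disjoint, each $\min E_i$ falls in at most one $\ran x_j$, giving at most $k$ pairs of the first kind; since the $E_i$ are disjoint, each $\min\ran x_j$ falls in at most one $E_i$, giving at most $m$ of the second kind. Hence there are at most $k+m$ contributing pairs.

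Combining these facts and using the hypotheses $n\geqslant k$ and $n\geqslant m^2$ (the latter exactly so that $m/n\leqslant 1/m\leqslant 1$), I would conclude
\begin{equation*}
\left|\left(\frac1n\sum_{i=1}^k\e_i e_{\eta_i}^*\circ P_{E_i}\right)(x)\right|\leqslant \frac{2N}{N-2\theta}\cdot\frac{k+m}{n}\leqslant \frac{2N}{N-2\theta}\left(1+\frac1m\right)\leqslant \frac{4N}{N-2\theta},
\end{equation*}
which is the desired estimate. The only delicate point is the counting of contributing pairs; everything else is a direct application of the uniform bound on the Bourgain--Delbaen projections.
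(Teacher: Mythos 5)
Your overall strategy --- bounding each pair $(i,j)$ by $\|e_{\eta_i}^*\circ P_{E_i}\|\,\|x_j\|\leqslant 2N/(N-2\theta)$ and then counting the contributing pairs via the interval structure --- is sound and close in spirit to the paper's proof, but the counting step as written rests on a false premise, caused by conflating two different index sets. The intervals $E_i$ live in the index set of the FDD $(M_q)_q$ (values of $q$, i.e.\ levels), whereas the $x_j$ are blocks of the \emph{basis} $(d_{\ga_i})_i$, so their ranges naturally live in the set of basis positions. For your vanishing statement ``$P_{E_i}x_j=0$ whenever $E_i\cap\ran x_j=\emptyset$'' to be true, $\ran x_j$ must mean the FDD-range, i.e.\ the smallest interval of levels $q$ with $P_{\{q\}}x_j\neq 0$. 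But then your claim that the $\ran x_j$ are pairwise disjoint is false: a block of the basis need not be a block of the FDD. Two consecutive $x_j$'s can both have nonzero components in the same $M_q$ (the break between them may fall inside a single $\De_q$), and in fact one level $q$ can belong to the FDD-ranges of \emph{many} $x_j$'s (several blocks can sit entirely inside one $\De_q$). Consequently ``each $\min E_i$ falls in at most one $\ran x_j$'' fails, and with it your bound of $k$ on the pairs of the first kind.

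The gap is repairable in two ways. (a) Keep levels: the FDD-ranges $[a_j,b_j]$ satisfy the weak ordering $b_j\leqslant a_{j+1}$, so if $\min E_i$ lies in several $[a_j,b_j]$, then for all but the smallest such $j$ one has $a_j=\min E_i\in E_i$, i.e.\ those pairs are already of your second kind; the total count $k+m$ survives. (b) Cleaner: translate to positions. Let $\tilde E_i$ be the interval of basis positions occupied by $\cup_{q\in E_i}\De_q$; then $e_{\eta_i}^*\circ P_{E_i}$ has support (with respect to $(d_{\ga_i}^*)_i$) inside $\tilde E_i$, so $e_{\eta_i}^*\circ P_{E_i}(x_j)=0$ unless $\tilde E_i\cap\ran x_j\neq\emptyset$, and now both $(\tilde E_i)_i$ and $(\ran x_j)_j$ are genuinely pairwise disjoint families of intervals of positions, so your counting argument goes through verbatim. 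With either repair your proof is correct, and it is then a mild sharpening of the paper's argument: you only need $n\geqslant\max\{m,k\}$, whereas the paper splits the functionals $e_{\eta_i}^*\circ P_{E_i}$ into those whose support meets at least two of the ranges $\ran x_j$ (at most $m$ of them, each estimated crudely by $m\cdot 2N/(N-2\theta)$, which is exactly where $n\geqslant m^2$ is used) and those meeting at most one.
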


\begin{proof}
We shall consider the support and range of vectors with respect to the basis $(d_{\ga_i})_i$. Set $x_i^* = \e_ie_{\eta_i}^*\circ P_{E_i}$ for $i=1,\ldots,k$. Note that $(x_i^*)_{i=1}^k$ is a successive block sequence of $(d_{\ga_i}^*)_i$ and by Remark \ref{bd norms in this particular case}, $\|x_i^*\| \leq 2N/(N-2\theta)$ for $i=1,\ldots,k$. Let \(I_1\) be the set of those \(i \leq k\) such that the support of \(x_i^*\) intersects the range of at least two of the \(x_j\)'s. Then \(\#I_1 \leq m\) and so \(|(1/n \sum_{i \in I_1} x_i^*)(\sum_{j=1}^m x_j) | \leqslant (2N/(N-2\theta))m^2/n \leq 2N/(N-\theta)\). Let \(I_2 = \{i \leq n : \, i \notin I_1\}\). It is clear that \(|(1/n\sum_{i \in I_2} x_i^*)(\sum_{j=1}^m x_j) | \leqslant 2N/(N-2\theta)\) and the proof is complete.
\end{proof}

\begin{prp}\label{asymptotic c0 basis}
Set $K_{N,\theta} = (2N^3 + 4\theta N^2 - 4\theta N)/(N^2 - 3\theta N +2\theta^2)$ and let \(u_1 < \cdots < u_m \) be blocks of \((d_{\ga_i})_i\) of norm at most equal to one so that if we consider the support of the vectors $u_i$ with respect to the basis \((d_{\ga_i})_i\) then, \(m \leq \min \supp u_1\). Then \(\|\sum_{i=1}^m u_i \| \leqslant K_{N,\theta}\).
\end{prp}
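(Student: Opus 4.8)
The plan is to compute the norm of $u=\sum_{j=1}^m u_j$ as $\|u\| = \sup_{\ga\in\Ga}|e_\ga^*(u)|$, since $\X$ sits isometrically inside $\ell_\infty(\Ga)$, and to bound $|e_\ga^*(u)|$ uniformly in $\ga$. The natural device is the analysis of $e_\ga^*$ provided by Proposition \ref{analysis}: writing $e_\ga^* = \sum_{r=1}^a d_{\xi_r}^* + \frac{\theta}{N}\sum_{r=1}^a f_r$ with $f_r = \frac{1}{n_r}\sum_{i=1}^{k_r}\e_{r,i}e_{\eta_{r,i}}^*\circ P_{E_{r,i}}$ and $a=\ag(\ga)\le N$, I would estimate the diagonal part $\sum_r d_{\xi_r}^*(u)$ and the averaging part $\frac{\theta}{N}\sum_r f_r(u)$ separately. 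Because the quantity being bounded will reappear on the right-hand side, I expect the cleanest formulation to be an induction showing that the supremum $K^*$ of $\|\sum_j v_j\|$ over all admissible block families (each $v_j$ of norm at most one, the number of blocks at most the minimum of the support of the first block) satisfies a single self-referential inequality.

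First I would locate the interaction of $e_\ga^*$ with the blocks. Let $q_1 = \ra$ of the first basis vector in $\supp u_1$, so that $m \le \min\supp u_1 \le \#\Ga_{q_1}$. Since the intervals in the analysis satisfy $E_{1,1} < \cdots < \ra(\xi_1) < E_{2,1} < \cdots < \ra(\xi_a)$ and every block has range contained in $[q_1,\infty)$, there is a unique index $r_0$ with $\ra(\xi_{r_0-1}) < q_1 \le \ra(\xi_{r_0})$; for $r < r_0$ the summands $d_{\xi_r}^*$ and $f_r$ are supported strictly below $q_1$ and hence annihilate $u$. For the diagonal part I would use that $d_{\xi_r}^*(u)$ equals the single coordinate of the unique block containing $\xi_r$, so $|d_{\xi_r}^*(u)| \le \|d_{\xi_r}^*\| \le 2N/(N-2\theta)$ by Remark \ref{bd norms in this particular case}, and since at most $a \le N$ terms survive, $|\sum_r d_{\xi_r}^*(u)| \le 2N^2/(N-2\theta)$. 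This is the dominant contribution and accounts for the leading $2N^3$ in the numerator of $K_{N,\theta}$.

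For the averaging part I would separate the weights according to $r_0$. When $r > r_0$ we have $\ra(\xi_{r-1}) \ge q_1$, whence $n_r > (\#\Ga_{\ra(\xi_{r-1})})^2 \ge (\#\Ga_{q_1})^2 \ge m^2$ and $k_r \le n_r$, so Lemma \ref{useful lema} applies directly and gives $|f_r(u)| \le 4N/(N-2\theta)$; there are at most $a-1 \le N-1$ such terms, contributing $\frac{\theta}{N}(N-1)\frac{4N}{N-2\theta}$. The term $f_{r_0}$ is exceptional, since its weight $n_{r_0}$ need not exceed $m^2$ and the Lemma is unavailable. Here I would bound $|f_{r_0}(u)| \le \frac{1}{n_{r_0}}\sum_i \|P_{E_{r_0,i}}u\|$ and recognize each $P_{E_{r_0,i}}u$ as the restriction of the block family to an interval: its interior blocks are unchanged and again satisfy the inductive hypothesis, while at most two boundary blocks are truncated. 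Using $k_{r_0} \le n_{r_0}$ to absorb the averaging, this returns a term of the form $\frac{\theta}{N}K^*$, up to boundary corrections.

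Collecting the three estimates yields the self-referential inequality $K^* \le \frac{2N^2}{N-2\theta} + \frac{\theta}{N}(N-1)\frac{4N}{N-2\theta} + \frac{\theta}{N}K^*$, and solving for $K^*$ produces exactly $K^* \le \frac{2N^3 + 4\theta N^2 - 4\theta N}{(N-\theta)(N-2\theta)} = K_{N,\theta}$, the factor $N/(N-\theta)$ arising precisely from absorbing the $\frac{\theta}{N}K^*$ term. The main obstacle I anticipate is the exceptional average $f_{r_0}$: one must show that restricting the block family to the intervals $E_{r_0,i}$ costs no more than the budgeted $\frac{\theta}{N}K^*$, which forces a careful treatment of the two truncated boundary blocks of each interval so that they do not inflate the constant beyond $K_{N,\theta}$. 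Verifying that the recursion closes with the stated constant, rather than a strictly larger one, is the delicate point of the argument.
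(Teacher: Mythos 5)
Your overall architecture — the analysis of $e_\gamma^*$ from Proposition \ref{analysis}, the diagonal part bounded by Remark \ref{bd norms in this particular case}, the at most $N-1$ fast-growing averages killed by Lemma \ref{useful lema}, and one exceptional average handled by self-reference — is the same as the paper's, but your treatment of the exceptional average $f_{r_0}$ has a genuine gap, and it is precisely the point you flag as ``delicate'' at the end. You bound $|f_{r_0}(u)|$ by cutting the \emph{vector}: $\|P_{E_{r_0,i}}u\|\leqslant \|\sum_{\mathrm{interior}}u_j\| + \|P_{E_{r_0,i}}u_{j_1}\| + \|P_{E_{r_0,i}}u_{j_2}\|$. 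The two truncated boundary blocks are no longer norm-one blocks; all one can say is $\|P_{E_{r_0,i}}u_{j_b}\|\leqslant 2C$ with $C=N/(N-2\theta)$, so each interval costs $K^*+4C$, and after averaging (using $k_{r_0}\leqslant n_{r_0}$) and multiplying by $\theta/N$ the inequality you actually obtain is $K^*\leqslant \frac{2N^2}{N-2\theta}+\frac{4\theta(N-1)}{N-2\theta}+\frac{\theta}{N}\bigl(K^*+\frac{4N}{N-2\theta}\bigr)$, which solves to $K^*\leqslant (2N^3+4\theta N^2)/\bigl((N-\theta)(N-2\theta)\bigr)$ — strictly larger than $K_{N,\theta}$. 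Your displayed recursion reaches the stated constant only because the boundary corrections you acknowledge are silently dropped; within your framework they cannot be argued away, since cutting $u$ genuinely produces truncated blocks. There is a second, independent problem: ``solving for $K^*$'' presupposes that the supremum $K^*$ over all admissible families is finite, which is not known a priori (the trivial bound grows like $m$); a self-bounding inequality for a possibly infinite quantity proves nothing. (This can be patched, e.g.\ by working with subfamilies of a fixed family, but the constant remains inflated.)

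The paper closes both holes with a single device that your proposal is missing: cut the \emph{functional} instead of the vector, and build the cut into the inductive statement. One proves, by induction on $\ra(\gamma)$, that $|e_\gamma^*\circ P_E(u)|\leqslant K_{N,\theta}$ for \emph{every} interval $E$ of $\N\cup\{0\}$; the target constant is fixed in advance at every stage, so no finiteness of a supremum is ever needed. Writing $e_\gamma^*\circ P_E=\sum_{r\in G}d_{\xi_r}^*+\frac{\theta}{N}\sum_{r}y_r^*$ with $y_r^*=\frac{1}{n_r}\sum_{i}\varepsilon_{r,i}\,e_{\eta_{r,i}}^*\circ P_{E_{r,i}\cap E}$, the exceptional term $y_{r_0}^*$ is an average of $k_{r_0}\leqslant n_{r_0}$ functionals, each of the form $e_\eta^*\circ P_{E'}$ with $\ra(\eta)<\ra(\gamma)$, so each falls verbatim under the inductive hypothesis and is bounded by $K_{N,\theta}$ — no truncated blocks, no $4C$ correction — and the recursion then closes with exactly the stated constant, since $\frac{2N^2}{N-2\theta}+\frac{4\theta(N-1)}{N-2\theta}+\frac{\theta}{N}K_{N,\theta}=K_{N,\theta}$ defines $K_{N,\theta}$. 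If you replace your treatment of $f_{r_0}$ by this strengthened induction, your remaining estimates (the diagonal bound $2N^2/(N-2\theta)$ and the $r>r_0$ averages via Lemma \ref{useful lema}) go through unchanged and reproduce the paper's proof.
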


\begin{proof}
Set $u = \sum_{i=1}^m u_i$. We use induction on $\ra(\gamma)$ to show that for every $\ga\in\Ga$ and every interval $E$ of $\N$, $|e_\ga^*\circ P_E(u)| \leqslant K_{N,\theta}$. This assertion is easy when \(\ra(\ga) = 0\). Assume that $q\in\N\cup\{0\}$ is such that the assertion holds for every $\ga\in\Ga_q$ and $E$ interval of $\N$ and let $\ga\in\Ga_{q+1}\setminus\Ga_q$ and $E$ be an interval of $\N$. Applying Proposition \ref{analysis}, write
\begin{equation*}
e_\ga^* = \sum_{r = 1}^a d_{\xi_r}^* + \frac{\theta}{N}\sum_{r=1}^a \frac{1}{n_r} \sum_{i=1}^{k_r}\e_{r,i}e_{\eta_{r,i}}^*\circ P_{E_{r,i}}
\end{equation*}
so that the conclusion of that proposition is satisfied. For $r = 1,\ldots,a$ define $y_r^* = (1/n_r)\sum_{i=1}^{k_r}\e_{r,i}e_{\eta_{r,i}}^*\circ P_{E_{r,i}\cap E}$ and $G = \{r:\;\ra(\xi_r)\in E\}$. Observe that $e_\ga^*\circ P_E = \sum_{r\in G}d_{\xi_r}^* + (\theta/N)\sum_{r=1}^ay_r^*$.

If $\xi_r = \ga_{i_r}$ note that $i_1<\cdots<i_a$. Considering the support of $u$ with respect to the basis $(d_{\ga_i})_i$, set $r_0 = \min\{r :\;i_r\geqslant \min\supp u\}$. The inductive assumption yields that $|y_{r_0}^*(u)| \leqslant K_{N,\theta}$, while the growth condition on the $n_r$'s implies that $n_r > m^2$ for $r > r_0$. Lemma \ref{useful lema} yields
\begin{equation}\label{averages part}
\frac{\theta}{N}\sum_{r=1}^a|y_r^*(u)| \leqslant \frac{\theta}{N}\left(K_{N,\theta} + (N-1)\frac{4N}{N-2\theta}\right)
\end{equation}
while Remark \ref{bd norms in this particular case} implies
\begin{equation}\label{bd part}
\sum_{r\in G} |d_{\xi_r}^*(u)| \leqslant N\frac{2N}{N-2\theta}.
\end{equation}
Some calculations combining \eqref{averages part} and \eqref{bd part} yield the desired result.
\end{proof}

\begin{prp}\label{as c0 ell1 predual} The space $\X$ is a $\mathcal{L}_\infty$-space with a Schauder basis $(d_{\ga_i})_i$ satisfying the following properties.
\begin{itemize}

\item[(i)] The basis $(d_{\ga_i})_i$ is shrinking, in particular $\X^*$ is isomorphic to $\ell_1$.

\item[(ii)] The space $\X$ is asymptotic $c_0$ with respect to the basis $(d_{\ga_i})_i$.

\item[(iii)] The space $\X$ does not contain an isomorphic copy of $c_0$.

\end{itemize}
\end{prp}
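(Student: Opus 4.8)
The plan is to establish the three assertions by leaning on the asymptotic $c_0$ upper estimate already contained in Proposition \ref{asymptotic c0 basis}, which immediately yields (ii) and, through a weak-nullity argument, (i); assertion (iii) is where genuinely new work is needed, so I would prove it last. For (ii) the upper half is a rescaling of Proposition \ref{asymptotic c0 basis}: given successive blocks $u_1<\cdots<u_m$ of $(d_{\ga_i})_i$ with $m\le\min\supp u_1$, dividing by $\max_i\|u_i\|$ and applying that proposition gives $\|\sum_{i=1}^m u_i\|\le K_{N,\theta}\max_i\|u_i\|$. For the matching lower half I would use that the Bourgain--Delbaen projections are uniformly bounded, $\|P_E\|\le 2N/(N-2\theta)$ for every interval $E$ by Remark \ref{bd norms in this particular case}; applying $P_{\ran u_{i_0}}$ to $\sum_i u_i$ recovers $u_{i_0}$, whence $\max_i\|u_i\|\le (2N/(N-2\theta))\|\sum_i u_i\|$. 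Together these give the two-sided asymptotic $c_0$ estimate with a constant depending only on $N$ and $\theta$.

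For (i) I would show directly that every normalized block sequence $(u_k)_k$ of $(d_{\ga_i})_i$ is weakly null; since a basis is shrinking precisely when all of its normalized block sequences are weakly null, this proves the basis shrinking, and then Proposition \ref{facts about the dual}(iii) identifies $\X^*$ with $\ell_1$ up to the constant $C=N/(N-2\theta)$. Suppose not; then there are $x^*\in\X^*$ with $\|x^*\|=1$, a $\de>0$, and, after passing to a subsequence, $|x^*(u_k)|\ge\de$ for all $k$. Because the supports of the $u_k$ tend to infinity, for any prescribed $m$ I may discard finitely many terms so that $m\le\min\supp u_1$ and set $v=\sum_{k=1}^m \sgn(x^*(u_k))\,u_k$. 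Proposition \ref{asymptotic c0 basis} gives $\|v\|\le K_{N,\theta}$, while $x^*(v)=\sum_{k=1}^m|x^*(u_k)|\ge m\de$; hence $m\de\le K_{N,\theta}$ for every $m$, which is absurd.

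The real content is (iii). First I would reduce to block sequences: a copy of $c_0$ in $\X$ produces a seminormalized sequence equivalent to the $c_0$-basis, which is weakly null and stays weakly null under the (weak-to-weak continuous) embedding, so by the Bessaga--Pe\l czy\'nski selection principle and a small perturbation one obtains a normalized block sequence $(x_k)_k$ of $(d_{\ga_i})_i$ equivalent to the unit vector basis of $c_0$. By the Bessaga--Pe\l czy\'nski $c_0$-theorem, ruling this out is equivalent to proving that for every normalized block sequence the series $\sum_k x_k$ fails to be weakly unconditionally Cauchy, i.e. that $\sup_{F,\,\e}\big\|\sum_{k\in F}\e_k x_k\big\|=\infty$. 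The plan is to pass to a rapidly increasing subsequence and then, exploiting $\theta>1$, to construct functionals of high complexity witnessing this supremum. Using the two-step extension functionals of the form \eqref{slightly less simple type extension} one chains up to $N$ successive averages, each capturing fresh blocks of the sequence with the matching sign through terms $e_{\eta}^*\circ P_E$; iterating this through the rank recursion, where the $\eta_{r,i}$ are themselves chosen to be functionals of large age, the weights $\theta/N$ accumulate across scales faster than the $1/n_r$ averaging damps them, so the signed norms grow without bound. The admissibility of the averages is governed precisely by Lemma \ref{useful lema} and the analysis of Proposition \ref{analysis}, and the growth condition on the $n_r$ together with $\theta>1$ is exactly what is needed for the accumulation to dominate.

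I expect the recursive construction of these witnessing functionals to be the main obstacle, specifically the bookkeeping that shows the conditional part $\sum_r d_{\xi_r}^*$ appearing in Proposition \ref{analysis} cannot cancel the gain produced by the chained averages, and that the $1/n_r$-damping loses to the factor $\theta>1$ uniformly in the depth of the recursion. This is where the interplay of $N$ and $\theta$, through $K_{N,\theta}$ and the bound $N/(N-2\theta)$, must be tracked with care, and it is the step that genuinely distinguishes $\X$ from $c_0$ despite the asymptotic $c_0$ behaviour established in (ii).
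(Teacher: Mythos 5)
Your parts (i) and (ii) are correct and essentially the paper's own argument: (ii) is read off from Proposition \ref{asymptotic c0 basis} together with the uniform boundedness of the interval projections, and your weak-nullity argument for shrinkingness is valid, after which Proposition \ref{facts about the dual} identifies $\X^*$ with $\ell_1$.

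The genuine gap is in (iii), and you flag it yourself: what you offer there is a plan, and the plan as stated misses the two ideas that make the argument work. First, the paper does not argue with arbitrary block sequences of the basis but with \emph{skipped} block sequences with respect to the FDD $(M_q)_q$ (from a copy of $c_0$ one extracts such a sequence by Bessaga--Pe\l czy\'nski plus a further subsequence whose consecutive terms skip an FDD level). Skipping is exactly what resolves your worry about the conditional part $\sum_r d_{\xi_r}^*$ in \eqref{analyse this}: by Remark \ref{coordinate composed with projection} one has $d_{\xi_r}^* = e_{\xi_r}^*\circ P_{\{\ra(\xi_r)\}}$, and in the construction the ranks of the chained tuples $\xi_r$ can be placed inside the skipped levels, so these terms vanish identically on every vector in play; nothing has to be ``dominated by the gain''. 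Second, the mechanism is not that ``the weights $\theta/N$ accumulate faster than the $1/n_r$ damping''. If the coordinates $\eta_{r,i}$ in a functional merely capture single blocks at value at most $1$, then the analysis of Proposition \ref{analysis} caps the total at $\frac{\theta}{N}\cdot a \leqslant \theta$, since the ages satisfy $a\leqslant N$; no single functional built this way can beat $\theta$ on a sum of norm-one blocks. What one does instead is take \emph{saturated} averages, $k_r = n_r$, each term $e_{\eta_{r,i}}^*\circ P_{E_{r,i}}$ capturing one block at essentially its full norm, so the average suffers no damping at all; chaining $N$ such full averages via tuples of the form \eqref{slightly less simple type tuple} produces $\ga$ with $e_\ga^*\left(\sum_{k\in I}x_k\right)\geqslant \theta - \e$.

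The unboundedness then comes from a multiplicative \emph{iteration}, which is absent from your sketch: the vectors $y_j = \sum_{k\in I_j}x_k$ over successive disjoint sets $I_j$ form a new skipped sequence of vectors of norm at least $\theta-\e$, each witnessed by a coordinate $\ga_j$ whose rank lies just beyond the range of $y_j$; running the same construction with the $\eta_{r,i}$ now chosen among these $\ga_j$ yields a functional giving value at least $(\theta-\e)^2$ on a sum of the $y_j$, and after $n$ rounds one obtains finite subsums of the original sequence of norm at least $(\theta-\e)^n\rightarrow\infty$, since $\theta>1$. This recursion --- with the bookkeeping of ranks, skipped levels, and the growth conditions $n_r \geqslant (\#\Ga_{\ra(\xi_{r-1})})^2$ which force the averages to be long enough to be saturated --- is the actual content of (iii). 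Without it, your proposal establishes only the (correct) reduction of (iii) to the statement that no seminormalized block sequence generates a weakly unconditionally Cauchy series, together with Lemma \ref{useful lema}-type upper bounds that point in the opposite direction; the witnessing construction itself, which is where $\theta>1$ enters, remains to be done.
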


\begin{proof}
The fact that \(\X\) is asymptotic \(c_0\) with respect to \((d_{\ga_i})_i\) follows directly from Proposition \ref{asymptotic c0 basis}. We obtain in particular that \((d_{\ga_i})_i\) is shrinking and hence, by Proposition \ref{facts about the dual}, the dual of $\X$ is isomorphic to $\ell_1$. To show that \(\X\) does not contain an isomorphic copy of $c_0$, let us consider the FDD $(M_q)_{q=0}^\infty$ as it was defined in Proposition \ref{it is an fdd}. Let $(x_k)_k$ be a sequence of skipped block vectors, with respect to the FDD $(M_q)_{q=0}^\infty$, all of which have norm at least equal to one. Then for every $\e>0$ there exist a finite subset $I_1$ of $\N$ and $\ga\in\Ga$ so that $e_\ga^*(\sum_{k\in I_1}x_k) \geqslant \theta - \e$. It follows from this that for all \(n \in \mathbb{N}\) we can find a $\ga\in\Ga$ and find a finite subset \(J\) of \(\mathbb{N}\) so that
\(e_\ga^*(\sum_{i \in J} x_k ) \geqslant (\theta-\e)^n\). Therefore, \(c_0\) is indeed not isomorphic to a subspace of \(\X\).
\end{proof}

\begin{rmk}
In \cite{A} D. E. Alspach proved that the $\mathcal{L}_\infty$-space with separable dual defined in \cite{BD} has Szlenk index $\omega$. We note that the space $\X$ has Szlenk index $\omega$ as well. Indeed, if this were not the case then by \cite[Theorem 1]{AB} we would conclude that $\X$ has a quotient isomorphic to $C(\omega^\omega)$ which would imply that $\X$ admits an $\ell_1$ spreading model.
\end{rmk}

\begin{rmk}\label{reflexively and unconditionally saturated}
Every skipped block sequence, with respect to the FDD $(M_q)_{q=1}^\infty$, is boundedly complete and hence, the space $\X$ is reflexively saturated and also every block sequence in $\X$ contains a further block sequence which is unconditional. We also note that a similar method can be used to construct a reflexive asymptotic $c_0$ space with an unconditional basis. This space is related to Tsirelson's original Banach space \cite{T}.
\end{rmk}


\begin{thebibliography}{99}

\bibitem[A]{A} D. E. Alspach,
{\em The dual of the Bourgain-Delbaen space},
Israel J. Math. {bf 117} (2000), 239-259.


\bibitem[AB]{AB} D. E. Alspach and Y. Benyamini,
{\em $C(K)$ quotients of separable $\mathcal{L}_\infty$ spaces},
Israel J. Math. {\bf 32} (1979), 145-160.

\bibitem[ABM]{ABM}
S. A. Argyros, K. Beanland and P. Motakis,
{\em The strictly singular operators in Tsirelson like reflexive spaces},
Illinois J. Math. {\bf 57} (2013), no. 4, 1173-1217.


\bibitem[AFHORSZ]{7a}
S. Argryros, D. Freeman, R. Haydon, E. Odell, Th. Raikoftsalis, Th. Schlumprecht and D. Z. Zisimopoulou,
{\em Embedding Banach spaces into spaces with very few operators},
J. Functional Anal. {\bf 262} (2012), no. 3, 825-849.

\bibitem[AH]{AH} S. A. Argyros and R. G. Haydon,
{\em A hereditarily indecomposable $\mathcal{L}_\infty$-space that
solves the scalar-plus-compact problem},
Acta Math. {\bf 206} (2011), 1-54.






\bibitem[BD]{BD} J. Bourgain and F. Delbaen,
{\em A class of special $\mathcal{L}_\infty$ spaces},
Acta Math. {\bf 145} (1980), 155-176.


\bibitem[FOS]{FOS} D. Freeman, E. Odell and Th. Schlumprecht,
{\em The universality of $\ell_1$ as a dual space,}
 Math. Annalen. {\bf 351} (2011), no. 1, 149-186.
 

\bibitem[GL]{GL} G. Godefroy and D. Li,
{\em Some natural families of $M$-ideals},
Math. Scand. {\bf 66} (1990), 249-263.


\bibitem[JRZ]{JRZ} W. B. Johnson, H. P. Rosenthal and M. Zippin,
{\em On bases, finite dimensional decompositions and weaker structures in Banach spaces},
Israel J. Math. {\bf 9} (1971), 488-506.


\bibitem[LS]{LS} D. Lewis and C. Stegall,
{\em Banach spaces whose duals are isomorphic to $\ell_1(\Ga)$},
J. Functional Analysis {\bf 12} (1973), 177-187.


\bibitem[LP]{LP} J. Lindenstrauss and A. Pe\l czy\' nski,
{\em Absolutely summing operators in $\mathcal{L}_p$-spaces and their applications},
Studia Math. {\bf 29} (1968), 275-326.





\bibitem[LR]{LR} J. Lindenstrauss and H. P. Rosenthal,
{\em The $\mathcal{L}_p$ spaces},
Israel J. Math. {\bf 7} (1969), 325-349.










\bibitem[P]{P} A. Pe\l czyn\' ski,
{\em On Banach spaces containing $L_1(\mu)$},
Studia Math. {\bf 30} (1968), 231-246.




\bibitem[S]{S} C. Stegall,
{\em Banach spaces whose duals contain $\ell_1(\Gamma)$ with applications to the study of dual $L_1(\mu)$ spaces},
Trans. Amer. Math. Soc. {\bf 176} (1973), 463-477.



\bibitem[T]{T} B. S. Tsirelson,
{\em Not every Banach space contains $\ell_p$ or $c_0$},
Functional Anal. Appl. {\bf 8} (1974), 138-141.1






\end{thebibliography}
\end{document}